\DeclarePairedDelimiter\ceil{\lceil}{\rceil}
\DeclarePairedDelimiter\floor{\lfloor}{\rfloor}
\tikzset{vertex/.style={circle,draw,fill,scale=.35}}
\tikzset{op/.style={circle,fill,scale=.35}}
\tikzset{cl/.style={circle,fill,scale=.35}}
\tikzset{loopdown/.style={loop below,min distance=8mm,in=310,out=230,looseness=25}}
\tikzset{loopup/.style={loop above,min distance=8mm,in=130,out=50,looseness=25}}
\newcommand{\E}{\mathcal{E}}
\newcommand{\G}{\mathcal{G}}
\newcommand{\mybb}[1]{\mathbf{#1}}
\newcommand{\N}{\mathcal{N}}
\newcommand{\NN}{\mybb{N}}
\newcommand{\sage}{\textsc{SageMath}}
\newcommand{\V}{\mathcal{V}}
\newcommand{\W}{\mathcal{W}}
\newcommand{\wt}{\mathrm{wt}}
\lstdefinelanguage{Sage}[]{Python}
{morekeywords={False,True},sensitive=true}
\theoremstyle{plain}
\newtheorem{thm}{Theorem}[section]
\newtheorem{lemma}[thm]{Lemma}
\newtheorem{prop}[thm]{Proposition}
\theoremstyle{definition}
\newtheorem{dfn}[thm]{Definition}
\newtheorem{ex}[thm]{Example}
\newtheorem{remark}[thm]{Remark}
\numberwithin{equation}{section}
\numberwithin{figure}{section}
\numberwithin{table}{section}
\begin{document}
\title{Counting Anosov Graphs}
\author{Meera Mainkar}
\author{Matthew Plante}
\author{Ben Salisbury}

\address{Dept.\ of Mathematics\\ Central Michigan Univ.\\ Mt.\ Pleasant, MI 48859}

\email[Meera Mainkar]{maink1m@cmich.edu}
\email[Matthew Plante]{plant1mt@cmich.edu}
\email[Ben Salisbury]{salis1bt@cmich.edu}

\urladdr[Meera Mainkar]{http://people.cst.cmich.edu/maink1m}
\urladdr[Ben Salisbury]{http://people.cst.cmich.edu/salis1bt}

\thanks{M.M.\ was partially supported by CMU Early Career Grant \#C61940.}
\thanks{B.S.\ was partially supported by CMU Early Career Grant \#C62847.}

\keywords{graph enumeration, quotient graph, Anosov graph}
\subjclass[2010]{Primary 05C30; Secondary 05C22, 22E25}

\begin{abstract}
In recent work by Dani and Mainkar, a family of finite simple graphs was used to construct nilmanifolds admitting Anosov diffeomorphisms.  Our main object of study is this particular set of graphs, which we call {\it Anosov graphs}.  Moreover, Dani and Mainkar give a lower bound on the number of Anosov graphs in terms of the number of vertices and number of edges.  In this work, we improve this lower bound in terms of vertices and edges, and we give lower and upper bounds solely in terms of the number of vertices. 
\end{abstract}

\maketitle

\section{Introduction}

We define an equivalence relation on the set of vertices of a simple graph as follows. We say that  two vertices are equivalent if they have the same open neighborhoods or the same closed neighborhoods.  We  call a connected simple graph  an {\em Anosov graph} if  each equivalence class contains at least two vertices and  the vertices in an equivalence class of size two are adjacent.  In \cite{DM}, this set of graphs was used to construct examples of nilmanifolds admitting Anosov diffeomorphisms  which play an important and beautiful role in hyperbolic dynamics.

To motivate the connection to Anosov diffeomorphisms, we first recall the construction of two-step nilpotent Lie algebra associated to a finite simple graph $G = (V, E)$,  where $V$ is the set of vertices and $E$ is the set of edges.  Denote by $\V$ the real vector space with basis $V$ and let $\W$ be the subspace of the second exterior power $\bigwedge^2\V$ spanned by $x \wedge y$, for $x, y \in V$ such that $xy \in E$.  Consider the (vector space) direct sum $\N = \V \oplus (\bigwedge^2 \V) / \W$.  One may define a Lie bracket structure on $\N$ by asserting
\begin{enumerate}
 \item $[v_1, v_2] = v_1 \wedge v_2 \bmod \W$, for $v_1, v_2 \in \V,$ and
 \item $[u, w] = 0$ for $u \in \N$ and $w \in (\bigwedge^2 \V)/ \W$.
\end{enumerate}

The Lie algebra $\N$ defined using the relations above is a two-step nilpotent Lie algebra; that is, $[\N, [\N, \N]] = \{0\}$. We call the simply-connected nilpotent Lie group $N$ corresponding to the Lie algebra $\N$ the {\em $($two-step$)$ nilpotent Lie group associated to the graph $G$}.  Let $\Gamma$ denote the lattice in $N$ corresponding to the (additive) subgroup of $\N$ generated by $V \cup \{\frac{1}{2} x \wedge y : xy \in E\}$. Then the nilmanifold $N /\Gamma$ is called a {\em $($two-step$)$ nilmanifold associated with graph $G$}.

In \cite{DM},  the authors proved that the graph is Anosov if and only if the two-step nilmanifold  associated with the graph admits an Anosov diffeomorphism.  In this article, we give a lower bound for the number of Anosov graphs in terms of number of vertices and number edges (see Theorem \ref{thm:matt}). We note that the dimension of the nilmanifold associated with a graph is the sum of number of vertices and number of edges.  It is known that two graphs are isomorphic if and only if the associated nilpotent Lie groups are isomorphic (see \cite{M}).  Hence Theorem \ref{thm:matt} gives a lower bound for number of nilmanifolds of a given dimension associated with graphs admitting Anosov diffeomorphisms.  This lower bound is an improvement of the lower bound given in \cite{DM}.

Whether or not a graph is Anosov involves looking at clusters of vertices and how they collectively interact with neighboring clusters of vertices.  The study  of  Anosov graphs may then be reduced to studying modified graphs with the clustering behavior of the vertices already simplified.  The resulting reduced graphs are called {\it quotient graphs}.

%

In Section \ref{boundsAnosov}, we give both lower and upper bounds on the number of Anosov graphs on $n$ vertices  (Theorem \ref{The big one}). In Section \ref{bound p(n)}, we prove that the number of Anosov graphs on $n$ vertices is bounded below by the number of partitions on $n$ if $n \geq 9$ (Theorem \ref{p(n)a(n)}). This shows that the number of Anosov graphs on $n$ vertices grows exponentially as $n$ increases.  For smaller values of $n$, we list all Anosov graphs on $n$ vertices in Appendix \ref{appA}.  In Appendix \ref{appB}, we give \sage\ code for determining whether or not a given graph is Anosov.

\section{Background and Notation}

\subsection{Partitions}
Denote the cardinality of a set $S$ by $|S|$ and let $[n]=\{1,2,\dots,n\}$ for all $n\in\NN$.
A \emph{partition} $\lambda$ of a natural number $n\in\NN$ is an ordered tuple $\lambda=(\lambda_1,\lambda_2,\dots,\lambda_k)$ where $\lambda_i\in \NN$, $\lambda_{i+1}\le\lambda_i$ for all $i\in[k]$ and $\lambda_1+\dots+\lambda_k=n$. If $\lambda=(\lambda_1,\dots,\lambda_k)$, define the length of $\lambda$ to be $k$, which we will denote by $\ell(\lambda)=k$. Write $\lambda\vdash n$ to indicate that $\lambda$ is a partition of $n$. If $\lambda=(\lambda_1,\dots,\lambda_k)$, then each $\lambda_i$ is called a \emph{part} of $\lambda$ and $\ell(\lambda)$ is the total number of parts in $\lambda$.

\subsection{Graphs}
We will appeal to \cite{Diestel} for common notions from the theory of graphs; any terms used, but not defined, here may be found in that text.

%

We will not consider graphs with multiple edges but adopt the convention that a graph is allowed loops and a {\it simple} graph is not allowed loops. A simple graph $G=(V,E)$ is \emph{complete} provided for each pair of distinct vertices $x,y\in V$ the edge $xy\in E$; these simple graphs are denoted $K^r$, where $r$ is the number of vertices in the graph. Similarly, a simple graph $G=(V,E)$ is \emph{edgeless} provided $E=\emptyset$; these graphs are denoted $\overline{K^r}$, where $r$ again denotes the number of vertices.  

Given an equivalence relation $R$ on the set of vertices $V$ of a graph $(V,E)$ the \emph{quotient graph} of $G$ modulo $R$ is the graph whose vertices are equivalence classes of the vertices induced by $R$ and there is an edge between the vertices if the vertices of the equivalence classes were adjacent in $G$. Denote the quotient graph $G$ modulo $R$ with 
\[
G/R=(V/R,\{P_iP_j:xy\in E \text{ for } x\in P_i \text{ and } y \in P_j\}),
\] 
where $\{P_1,\dots,P_k\}$ is the equivalence classes in $V/R$. If multiple edges exist between vertices, then merge them into a single edge.

\subsection{Neighborhoods}\label{nhd}

Let $G=(V,E)$ be a graph. For $x \in V$, define the \emph{open neighborhood} and \emph{closed neighborhood} of $x$, respectively, to be
\[
N(x) = \{ y \in V : xy \in E \} \qquad\text{ and } \qquad
\overline{N}(x) = N(x) \cup \{x \}.
\]

\begin{dfn}
For vertices $x,y\in V$, we say $x \sim y$ if $N(x)=N(y)$ or $\overline{N}(x)=\overline{N}(y)$.  
\end{dfn}

\begin{remark}
If $N(x) = N(y)$, it is sometimes said that $x$ and $y$ are twin vertices.  Along these lines, the condition $\overline{N}(x)=\overline{N}(y)$ may be interpreted as $x$ and $y$ being ``conjoined twins.''
\end{remark}

\begin{lemma}
\label{eqrel}
The relation $\sim$ is an equivalence relation on $V$.
\end{lemma}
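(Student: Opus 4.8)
The plan is to verify the three defining properties of an equivalence relation, reflexivity, symmetry, and transitivity, noting that the first two are essentially immediate while transitivity carries all the content. Reflexivity is clear since $N(x) = N(x)$, so $x \sim x$; symmetry is clear because each of the two defining conditions, $N(x) = N(y)$ and $\overline{N}(x) = \overline{N}(y)$, is visibly symmetric in $x$ and $y$. The difficulty is that $\sim$ is the union of two separate equivalence relations (equality of open neighborhoods and equality of closed neighborhoods), and a union of equivalence relations is generally not transitive; so the heart of the argument is showing that this particular union is closed under transitivity.

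The key tool I would establish first is a dichotomy relating neighborhood equality to adjacency, which rests on the convention that the graph is simple (so $x \notin N(x)$). Specifically: if $x \neq y$ and $N(x) = N(y)$, then $x$ and $y$ are non-adjacent, because $y \in N(x)$ would force $y \in N(y)$, a loop; and if $x \neq y$ and $\overline{N}(x) = \overline{N}(y)$, then $x$ and $y$ are adjacent, because $x \in \overline{N}(x) = \overline{N}(y) = N(y) \cup \{y\}$ together with $x \neq y$ gives $x \in N(y)$, i.e.\ $xy \in E$. In short, open-neighborhood twins are non-adjacent and closed-neighborhood twins are adjacent; this incompatibility is exactly what will rule out the problematic cases.

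For transitivity, suppose $x \sim y$ and $y \sim z$, and split into cases according to which defining condition realizes each relation. If both are realized by open neighborhoods, then $N(x) = N(y) = N(z)$, and if both by closed neighborhoods, then $\overline{N}(x) = \overline{N}(z)$; either way $x \sim z$. The main obstacle is the mixed case, say $N(x) = N(y)$ and $\overline{N}(y) = \overline{N}(z)$ (the other mixed case being symmetric). Here I would first dispose of the degenerate possibilities $x = y$ and $y = z$, in which $x \sim z$ follows directly from the surviving relation. Assuming $x, y, z$ distinct, I would derive a contradiction: the closed-neighborhood equality forces $yz \in E$, hence $z \in N(y) = N(x)$, so $xz \in E$ and therefore $x \in N(z) \subseteq \overline{N}(z) = \overline{N}(y) = N(y) \cup \{y\}$; since $x \neq y$ this yields $x \in N(y) = N(x)$, contradicting $x \notin N(x)$. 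Thus the mixed case never arises with distinct vertices, and transitivity holds in every case. I expect this mixed case, and the realization that it is vacuous rather than directly provable, to be the subtle point of the proof.
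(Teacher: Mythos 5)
Your proposal is correct and follows essentially the same route as the paper: reflexivity and symmetry are immediate, and the mixed transitivity case ($N(x)=N(y)$, $\overline{N}(y)=\overline{N}(z)$ with distinct vertices) is shown to be vacuous by deriving $x\in N(y)=N(x)$, contradicting simplicity. Your explicit dichotomy (open-neighborhood twins are non-adjacent, closed-neighborhood twins are adjacent) and your handling of the degenerate cases $x=y$, $y=z$ are just cleaner packaging of the same argument the paper gives more tersely.
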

\begin{proof}
One can observe $\sim$ is clearly reflexive. Also, $\sim$ is symmetric, as for any $x,y\in V$ if $x\sim y$ then either $N(x)=N(y)$ or $\overline{N}(x)=\overline{N}(y)$.  In either case $y\sim x$. To see $\sim$ is transitive, assume $x\sim y$ and $y\sim z$ for distinct $x,y,z\in V$. If $N(x)=N(y)=N(z)$ or $\overline{N}(x)=\overline{N}(y)=\overline{N}(z)$ then we are done. On the contrary, assume $N(x)=N(y)$ and $\overline{N}(y)=\overline{N}(z)$.  Then $z\in N(x)$, or rather $x\in N(z)$ and thus $x\in N(y)$ a contradiction since we assumed $N(x)=N(y)$.
\end{proof}

For any simple graph $G$ we can partition the set of vertices $V$ into equivalence classes, $P_1,P_2,\dots,P_k$, with respect to $\sim$.
Let $\lambda_i=|P_i|$ for each $i\in[k]$ and assume that $\lambda_1 \geq \cdots \geq \lambda_k$. Then each vertex of $V$ is in exactly one $P_i$ and
$\lambda=(\lambda_1,\lambda_2,\dots,\lambda_k)\vdash |V|$.
In this case, we say $G$ is of \emph{type} $\lambda=(\lambda_1,\dots,\lambda_k)$. We will say a class $P_i$ is \emph{edgeless} if the subgraph $G[P_i]$ is edgeless and we will say $P_i$ is \emph{complete} if the subgraph $G[P_i]$ is complete.

\begin{ex}
Suppose $G=(V,E)$ is the graph
\[
\begin{tikzpicture}[font=\small,baseline=-4]
\node[vertex,label={below:$a$}] (1) at (0,0) {};
\node[vertex,label={left:$b$}] (2) at (0,.75) {};
\node[vertex,label={above:$c$}] (3) at (0,1.5) {};

\node[vertex,label={below:$d$}] (4) at (2,-.5) {};
\node[vertex,label={below:$e$}] (5) at (3,-.37) {};
\node[vertex,label={below:$f$}] (6) at (4,-.25) {};

\node[vertex,label={above:$g$}] (7) at (2.5,1.75) {};
\node[vertex,label={above:$h$}] (8) at (3.5,1.5) {};
\path[-]
 (1) edge (2)
 (1) edge [bend right] node[left] {} (3)
 (1) edge (4)
 (1) edge (5)
 (1) edge (6)
 (1) edge (7)
 (1) edge (8)

 (2) edge (3)
 (2) edge (4)
 (2) edge (5)
 (2) edge (6)
 (2) edge (7)
 (2) edge (8)

 (3) edge (4)
 (3) edge (5)
 (3) edge (6)
 (3) edge (7)
 (3) edge (8)

 (4) edge (7)
 (4) edge (8)

 (5) edge (7)
 (5) edge (8)

 (6) edge (7)
 (6) edge (8);
\end{tikzpicture}
\]

Here the relation $\sim$ partitions $V$ into $P_1 = \{ a,b,c\}$, $P_2 = \{d,e,f\}$, $P_3 = \{g,h\}$, so $G$ is of type $\lambda=(3,3,2)$.  Moreover $G[P_1]$ is complete, $G[P_2]$ is edgeless, and $G[P_3]$ is edgeless.
\[
\begin{tikzpicture}[font=\small,baseline=-4,xscale=.9]
\node[vertex,label={below:$a$}] (1) at (1,0) {};
\node[vertex,label={below:$b$}] (2) at (2,0) {};
\node[vertex,label={below:$c$}] (3) at (3,0) {};
\node[circle,fill,scale=0,label={below:$G[P_1]:$}] (10) at (0,.25) {};
\path[-]
 (1) edge (2)
 (1) edge [bend left] node[left] {} (3)
 (2) edge (3);
\end{tikzpicture}
\hspace{2em}
\begin{tikzpicture}[font=\small,baseline=-4,xscale=.9]
\node[vertex,label={below:$d$}] (1) at (1,0) {};
\node[vertex,label={below:$e$}] (2) at (2,0) {};
\node[vertex,label={below:$f$}] (3) at (3,0) {};
\node[circle,fill,scale=0,label={below:$G[P_2]:$}] (10) at (0,.25) {};
\end{tikzpicture}
\hspace{2em}
\begin{tikzpicture}[font=\small,baseline=-4,xscale=.9]
\node[vertex,label={below:$g$}] (1) at (1,0) {};
\node[vertex,label={below:$h$}] (2) at (2,0) {};
\node[circle,fill,scale=0,label={below:$G[P_3]:$}] (10) at (0,.25) {};
\end{tikzpicture}
\]
The quotient graph $G/{\sim}$ is the graph,
\[
\begin{tikzpicture}[font=\small,baseline=-4]
\node[vertex,label={below:$P_1$}] (1) at (0,0) {};

\node[vertex,label={below:$P_2$}] (4) at (2,-.5) {};

\node[vertex,label={above:$P_3$}] (7) at (2.5,1.75) {};
\path[-]
 (1) edge (4)
 (1) edge (7)
 (1) edge [loopup] (1)
 (4) edge (7);
\end{tikzpicture}
\]
\end{ex}
\begin{lemma}
Each equivalence class $P_i$ is either edgeless or complete.
\end{lemma}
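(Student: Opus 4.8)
The plan is to analyze, within a single class $P_i$, how any two distinct vertices are related, and to show that the ``type'' of relation (twins versus conjoined twins) is forced to be the same across the entire class. First I would establish a clean dichotomy: for distinct $x,y\in P_i$, since $x\sim y$, either $N(x)=N(y)$ or $\overline{N}(x)=\overline{N}(y)$, and these two cannot hold simultaneously. Indeed, if both held, then $x\in\overline{N}(x)=\overline{N}(y)=N(y)\cup\{y\}=N(x)\cup\{y\}$; as $x\ne y$, this forces $x\in N(x)$, which is impossible since $G$ has no loops.

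Next I would record what each alternative says about adjacency inside $P_i$. Because $x\notin N(x)$, the equality $N(x)=N(y)$ gives $x\notin N(y)$, so $x,y$ are \emph{non-adjacent}; whereas $\overline{N}(x)=\overline{N}(y)$ gives $y\in N(x)$, so $x,y$ are \emph{adjacent}. Thus each pair in $P_i$ is of exactly one of two types: type (a) with $N(x)=N(y)$ (non-adjacent), or type (b) with $\overline{N}(x)=\overline{N}(y)$ (adjacent). The goal then becomes showing that a single class cannot mix the two types.

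The heart of the argument is to rule out mixing at a single vertex. Suppose some $y\in P_i$ had a type-(a) relation with $x$ and a type-(b) relation with $z$, that is, $N(x)=N(y)$ and $\overline{N}(y)=\overline{N}(z)$ with $x,y,z$ distinct. This is exactly the configuration shown to be contradictory in the transitivity step of Lemma \ref{eqrel}, so it cannot occur. Consequently, at every vertex of $P_i$ all of its in-class relations share a single type; call the vertex an $a$-vertex or a $b$-vertex accordingly.

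Finally I would propagate this across $P_i$. Fixing any $v_0\in P_i$ and any other $w\in P_i$, the pair $\{v_0,w\}$ carries the type of $v_0$ (and of $w$), so every vertex has the same type as $v_0$; hence every pair has the same type. If all pairs are type (a), then $G[P_i]$ has no edges and is edgeless; if all are type (b), then every pair is adjacent and $G[P_i]$ is complete, with the singleton and two-vertex cases immediate. The main obstacle is the middle step---excluding a vertex with both a type-(a) and a type-(b) relation---but this is precisely the contradiction already extracted in the proof of Lemma \ref{eqrel}, so the computation there can be reused essentially verbatim.
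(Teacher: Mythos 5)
Your proposal is correct, but it takes a genuinely different route from the paper. The paper argues by direct propagation of adjacency: assuming $P_i$ has an edge $xy$, it notes that for any $u\in P_i$ the relation $u\sim y$ gives the containment $N(y)\subseteq \overline{N}(u)$ \emph{regardless} of which of the two twin relations holds, so $x\in N(y)$ forces $xu\in E$; chaining the same observation once more (from $v\sim x$ and $u\in N(x)$) gives $uv\in E$ for every pair, hence completeness. That containment trick is what lets the paper avoid any case analysis on relation types. Your argument instead classifies each pair inside $P_i$ by which relation holds, proves the two relations are mutually exclusive for distinct vertices in a simple graph, identifies type with adjacency (open twins are non-adjacent, closed twins are adjacent), and rules out mixed types at a vertex by recycling exactly the contradiction from the transitivity step of Lemma \ref{eqrel}. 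What your approach buys is a sharper structural conclusion sitting just beneath the lemma: a class is not merely edgeless or complete, but uniformly an open-twin class (all open neighborhoods equal) or uniformly a closed-twin class (all closed neighborhoods equal), and it makes visible that the lemma is essentially the same computation as transitivity. What the paper's approach buys is brevity: two applications of one containment, no dichotomy or exclusivity lemma needed. Both proofs ultimately rest on the same fact---a simple graph has no loops---so the choice is one of organization rather than substance.
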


\begin{proof}
If $P_i$ is edgeless, then we are done. On the other hand, suppose $P_i$ is not edgeless; that is, there is an edge $xy\in E(P_i)$.
To show $P_i$ is complete we will have to show for every two vertices $u,v\in V(P_i)$, $uv\in E(P_i)$.
Let $u$ and $v$ be $V(P_i)$. If $u=x$ and $v=y$ then we are done, so assume otherwise.
Without loss of generality, say $u\neq x$.
Since $u$ and $y$ are both in $P_i$, we know $u\sim y$ and $N(y)\subseteq \overline N(u)$.
However $x\in N(y)$ since $xy\in E(P_i)$, thus $x\in \overline N(u)$ and $xu \in E(P_i)$ since $x\neq u$.
Since $v \sim x$ and $xu \in E(P_i)$, we have $uv\in E(P_i)$.
Since $u$ and $v$ are chosen arbitrarily, it follows that $P_i$ is complete.
\end{proof}

\section{Anosov graphs}

\begin{dfn}
Let $G$ be a connected simple graph of type $\lambda=(\lambda_1,\dots,\lambda_k)$. We say $G$ is \emph{Anosov} if
\begin{enumerate}
\item $\lambda_k \ge 2,$ and
\item $\lambda_i = 2$ implies $P_i$ is edgeless for all $1\leq i\leq k$, where $P_i$ denotes the corresponding equivalence class (see Section \ref{nhd}).
\end{enumerate}
\end{dfn}

\begin{ex}
The graph
\[
\begin{tikzpicture}[font=\small,baseline=-4]
\node[vertex,label={below:$v_1$}] (1) at (0,0) {};
\node[vertex,label={below:$v_2$}] (2) at (2,0) {};
\node[circle,fill,scale=0,label={below:$G:$}] (6) at (-1,.5) {};
\node[vertex,label={below:$v_3$}] (3) at (4,0) {};
\node[vertex,label={above:$v_4$}] (4) at (1,1) {};
\node[vertex,label={above:$v_5$}] (5) at (3,1) {};
\path[-]
 (1) edge (4)
 (1) edge (5)
 (2) edge (4)
 (2) edge (5)
 (3) edge (4)
 (3) edge (5);
\end{tikzpicture}
\]
is Anosov, but the two graphs
\[
\begin{tikzpicture}[font=\small,baseline=-4,xscale=.75]
\node[vertex,label={below:$v_1$}] (1) at (0,0) {};
\node[vertex,label={below:$v_2$}] (2) at (2,0) {};
\node[circle,fill,scale=0,label={below:$G':$}] (6) at (-1,.5) {};
\node[vertex,label={below:$v_3$}] (3) at (4,0) {};
\node[vertex,label={above:$v_4$}] (4) at (1,1) {};
\node[vertex,label={above:$v_5$}] (5) at (3,1) {};
\path[-]
 (1) edge (4)
 (1) edge (5)
 (2) edge (4)
 (2) edge (5)
 (3) edge (4)
 (4) edge (5)
 (3) edge (5);
\end{tikzpicture}
\hspace{5em}
\begin{tikzpicture}[font=\small,baseline=-4,xscale=.75]
\node[vertex,label={below:$v_1$}] (1) at (0,0) {};
\node[vertex,label={below:$v_2$}] (2) at (2,0) {};
\node[circle,fill,scale=0,label={below:$G'':$}] (6) at (-1,.5) {};
\node[vertex,label={below:$v_3$}] (3) at (4,0) {};
\node[vertex,label={above:$v_4$}] (4) at (1,1) {};
\path[-]
 (1) edge (4)
 (2) edge (4)
 (3) edge (4);
\end{tikzpicture}
\]
are not Anosov.
\end{ex}

Let $\mathcal{A}$ be the family of Anosov graphs, $\mathcal{A}_n$ be the set of all graphs in $\mathcal{A}$ on $n$ vertices, and let $a(n)$ be the number of Anosov graphs on $n$ vertices. We will now define families $\mathcal{U}$ and $\mathcal{L}$ which will be useful in finding a lower and upper bound on the number of Anosov graphs in terms of vertices. Define $\mathcal{U}$ to be the set of all  (not necessarily connected) graphs of type $\lambda=(\lambda_1,\dots,\lambda_k)$ such that $\lambda_k\geq 2$, $\mathcal{U}_n$ to be the set of all graphs in $\mathcal{U}$ on $n$ vertices, and $U(n)=|\mathcal{U}_n|$. Similarly, define $\mathcal{L}$ to be the set of all connected graphs of type $\lambda=(\lambda_1,\dots,\lambda_k)$ such that $\lambda_k\geq 3$, $\mathcal{L}_n$ to be the set of all graphs in $\mathcal{L}$ on $n$ vertices, and $L(n)=|\mathcal{L}_n|$. By definition of Anosov graphs, we have the following.

\begin{prop}
\label{UpandLow}

$L(n)\leq a(n)\leq U(n)$.
\end{prop}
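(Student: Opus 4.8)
The statement is a pair of inequalities between cardinalities of three families of graphs on $n$ vertices, so the plan is to prove it at the level of sets: I would establish the chain of inclusions
\[
\mathcal{L}_n \subseteq \mathcal{A}_n \subseteq \mathcal{U}_n
\]
and then pass to cardinalities, since by definition $L(n)=|\mathcal{L}_n|$, $a(n)=|\mathcal{A}_n|$, and $U(n)=|\mathcal{U}_n|$. Both inclusions should be pure definition-chasing, comparing each family against the two Anosov conditions, namely (1) $\lambda_k\ge 2$, and (2) $\lambda_i=2$ implies $P_i$ is edgeless.

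For the left inclusion $\mathcal{L}_n\subseteq\mathcal{A}_n$, I would take an arbitrary $G\in\mathcal{L}_n$, which by definition is a connected graph of some type $\lambda=(\lambda_1,\dots,\lambda_k)$ with $\lambda_k\ge 3$, and verify the two Anosov conditions. Condition (1) is immediate, since $\lambda_k\ge 3\ge 2$. Condition (2) holds \emph{vacuously}: because the parts are ordered with $\lambda_1\ge\cdots\ge\lambda_k\ge 3$, no part has size exactly $2$, so there is nothing to check and we never need to inspect the subgraphs $G[P_i]$. Hence $G$ is a connected Anosov graph, i.e.\ $G\in\mathcal{A}_n$.

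For the right inclusion $\mathcal{A}_n\subseteq\mathcal{U}_n$, I would take $G\in\mathcal{A}_n$, which by definition is a connected simple graph of type $\lambda$ with $\lambda_k\ge 2$. The family $\mathcal{U}$ consists of all (not necessarily connected) graphs of type $\lambda$ with $\lambda_k\ge 2$, imposing neither connectedness nor condition (2); both are restrictions that only shrink the family. Since a connected graph is in particular a graph and $G$ already satisfies $\lambda_k\ge 2$, we get $G\in\mathcal{U}_n$. Taking cardinalities along the chain $\mathcal{L}_n\subseteq\mathcal{A}_n\subseteq\mathcal{U}_n$ then yields $L(n)\le a(n)\le U(n)$.

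I do not expect a genuine obstacle here; the proposition is really just unwinding the definitions. The only points requiring a moment's care are recognizing that condition (2) is satisfied vacuously for graphs in $\mathcal{L}$ (so the single hypothesis $\lambda_k\ge 3$ suffices), and that $\mathcal{U}$ is obtained from $\mathcal{A}$ precisely by dropping the two extra requirements of connectedness and the size-$2$ edgeless condition.
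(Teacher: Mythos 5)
Your proof is correct and matches the paper's approach: the paper gives no separate argument, simply asserting the proposition follows "by definition of Anosov graphs," which is exactly the definition-chasing chain $\mathcal{L}_n \subseteq \mathcal{A}_n \subseteq \mathcal{U}_n$ you spell out. Your observation that condition (2) holds vacuously when $\lambda_k \ge 3$ is precisely the point being implicitly used.
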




\section{Quotient graphs and lower bounds}\label{brick}


In this section we shift our attention to the quotient graphs of simple graphs modulo $\sim$ (see subsection \ref{nhd}). Quotient graphs are particularly useful when observing Anosov graphs since each Anosov graph partitions its vertices into classes of size at least 2. We associate with each of these quotient graphs $G/{\sim}$ a weight function, $\wt\colon V/{\sim}\longrightarrow\NN$, such that $\wt(\mathbf{v})$ is the number of vertices from $G$ in the same equivalence class of $v$ induced by $\sim$. In drawing the quotient graph induced by $\sim$ we write each weight next to its corresponding vertex. Another important question we will address first is when is a graph $\G=([k],\E,\wt)$ where $\wt$ is an arbitrary weight function $\wt\colon [k]\longrightarrow\E$ a quotient graph, $G/{\sim}$, for some simple graph $G$. By answering this question we also develop a method for constructing our first lower bound on Anosov graphs in terms of both vertices and edges.

\begin{dfn}\label{mudtog}
Given a graph $\G=([k],\E,\wt)$, we say the \emph{deconstruction} of $\G$ induced by $\sim$
is a graph $G=(V,E)$ defined as follows.
For each $i\in [k]$, define
\begin{equation}\label{subset}
D_i=\{v_{i,1},\dots, v_{i,\wt(i)}\}
\qquad \text{ and } \qquad
V=\bigsqcup_{i\in[k]}D_i.
\end{equation}
For  distinct $x,y\in V$ with $x\in D_i$ and $y\in D_j$ for some $i,j\in[k]$, set
\begin{enumerate}
\item $xy\in E$ if $ij\in\E$,
\item $xy\not\in E$ otherwise.
\end{enumerate}
\end{dfn}


\begin{lemma}\label{advsim}
Let $\G=([k],\E,\wt)$ and let $G=(V,E)$ be the deconstruction of $\mathcal{G}$, with $D_1,D_
2,\dots, D_k$ subsets of $V$ as defined in Equation \eqref{subset}. If $x,y\in D_i$ for any $i\in[k]$, then $x\sim y$.
\end{lemma}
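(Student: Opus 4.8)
The plan is to compute the open and closed neighborhoods of an arbitrary vertex $x\in D_i$ directly from the adjacency rule in Definition \ref{mudtog}, observe that these sets depend only on the block index $i$ and not on which representative of $D_i$ was chosen, and then split into two cases according to whether $\mathcal{G}$ carries a loop at $i$.

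First I would fix $i\in[k]$ and take distinct $x,y\in D_i$; the case $x=y$ is handled by reflexivity of $\sim$. The guiding observation is that, by construction, whether a vertex $z\in D_j$ is adjacent to $x$ is governed entirely by whether $ij\in\mathcal{E}$ and never by the particular element $x$, the sole caveat arising when $j=i$. Writing $J=\{\,j\in[k]:ij\in\mathcal{E}\,\}$ for the set of neighbors of $i$ in $\mathcal{G}$, I would record that for any $x\in D_i$,
\[
N(x)=\Bigl(\bigcup_{j\in J,\ j\neq i}D_j\Bigr)\cup\bigl(D_i\cap N(x)\bigr),
\]
where the second piece equals $D_i\setminus\{x\}$ when $ii\in\mathcal{E}$ and is empty otherwise.

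Then I would split into two cases. If $ii\notin\mathcal{E}$, the vertices of $D_i$ are pairwise non-adjacent, so $N(x)=\bigcup_{j\in J}D_j$, which is manifestly independent of the chosen representative; hence $N(x)=N(y)$ and $x\sim y$. If instead $ii\in\mathcal{E}$, then $N(x)=\bigl(\bigcup_{j\in J,\ j\neq i}D_j\bigr)\cup(D_i\setminus\{x\})$, and passing to the closed neighborhood yields $\overline{N}(x)=N(x)\cup\{x\}=\bigcup_{j\in J}D_j$, again independent of the representative; hence $\overline{N}(x)=\overline{N}(y)$ and $x\sim y$.

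The only subtlety—and precisely the reason the definition of $\sim$ admits either equal open or equal closed neighborhoods—is this loop case: when $i$ carries a loop, the vertices of $D_i$ are mutually adjacent, so their \emph{open} neighborhoods differ (each omits itself), and one must pass to \emph{closed} neighborhoods to detect the equivalence. I expect the bookkeeping around this diagonal term $D_i$ to be the main point to get right; everything else is immediate from the fact that adjacency in $G$ factors through the block indices of $\mathcal{G}$.
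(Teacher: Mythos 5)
Your proof is correct and follows essentially the same route as the paper's: both arguments rest on the fact that adjacency in the deconstruction factors through the block indices, and both reduce to the dichotomy on whether $ii\in\mathcal{E}$, giving equal open neighborhoods when there is no loop at $i$ and equal closed neighborhoods when there is one. Your explicit computation of $N(x)$ as a union of blocks is a cleaner formulation than the paper's element-chasing (which cases on where a single neighbor $z$ lies), but the underlying idea is identical.
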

\begin{proof}
Let $x,y\in D_i$ be given. We will show that $N(x) = N(y)$ or $\overline{N}(x) = \overline{N}(y)$. Let $z \in N(x)$.
We will break this proof into two cases.

\underline{Case 1.} Assume $z\in D_i.$ Since $z\in N(x)$, it must be that $\overline{N}(x)=\overline{N}(z)$ by Definition \ref{mudtog}. However, $y\in D_i$, so this would imply  $\overline{N}(y)=\overline{N}(z)$. Combining these we get  $\overline{N}(x)=\overline{N}(y)$.

\underline{Case 2.} Assume $z\not\in D_i.$  Since $z\not\in D_i$ and $z\in N(x)$, it must be that $z\in D_j$ for some $j\neq i$ with $ij\in\E$. However, $y\in D_i$, so this would imply $z\in N(y)$. Therefore $N(x) \subseteq  N(y)$.
Similarly we can show that $N(y) \subseteq  N(x)$
 and hence $x\sim y$ for all $x,y\in D_i$ for all $i\in[k]$.
\end{proof}

\begin{remark}
As a function from graphs with weight functions to simple graphs, deconstruction is not injective.
\end{remark}


\begin{thm}
\label{Anosov Criteria}
A connected graph $\G=([k],\E,\wt)$ has a deconstruction to an Anosov graph provided
\begin{enumerate}
\item $\wt(i)\ge 2$ for all $i\in [k]$;
\item for all $i\in [k]$, if $\wt(i)=2$, then $ii\not\in \E$.
\end{enumerate}
\end{thm}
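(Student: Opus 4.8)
The plan is to verify directly that the deconstruction $G=(V,E)$ of $\G$ satisfies the definition of an Anosov graph: it should be a connected simple graph in which every $\sim$-class has size at least $2$ and every $\sim$-class of size exactly $2$ is edgeless. The engine is Lemma \ref{advsim}: each block $D_i$ lies inside a single $\sim$-class of $G$. Since the blocks $D_1,\dots,D_k$ partition $V$ and each is contained in one class, every $\sim$-class $P$ of $G$ is a disjoint union of blocks, namely $P=\bigsqcup_{i\,:\,D_i\subseteq P}D_i$. This structural observation reduces the two defining conditions on $G$ to statements about the weights and loops of $\G$, so I would establish it first.

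With that in hand, condition (1) is immediate: every class $P$ is nonempty and a union of blocks, so it contains some $D_i$, and by hypothesis $|D_i|=\wt(i)\ge 2$, whence $|P|\ge 2$. For condition (2), suppose $|P|=2$. As $P$ is a union of blocks each of size at least $2$, it must be a single block $D_i$ with $\wt(i)=2$. Hypothesis (2) then gives $ii\notin\E$, and by Definition \ref{mudtog} two vertices of $D_i$ are adjacent if and only if $ii\in\E$; hence $G[P]$ is edgeless. Note that merging of blocks into larger classes can only increase sizes, so it never manufactures a size-$2$ class carrying an edge, which is exactly why the reduction to the single-block case is legitimate.

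It remains to prove $G$ connected, which I would do by lifting paths from $\G$. Given $x\in D_i$ and $y\in D_j$, choose a path $i=i_0,i_1,\dots,i_m=j$ in the connected graph $\G$; each consecutive pair satisfies $i_ti_{t+1}\in\E$ with $i_t\neq i_{t+1}$, so by Definition \ref{mudtog} every vertex of $D_{i_t}$ is adjacent to every vertex of $D_{i_{t+1}}$. Choosing one representative in each intermediate block then yields a walk $x,r_1,\dots,r_{m-1},y$ in $G$, so $G$ is connected.

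The step needing the most care is connectivity, and with it the careful bookkeeping of the equivalence ``edge inside $D_i \iff ii\in\E$,'' which is used both to rule out edges in size-$2$ classes and to understand how blocks coalesce. The genuinely delicate point to isolate is the degenerate case $k=1$: a single loopless vertex of weight at least $2$ deconstructs to an edgeless, hence disconnected, graph, so the connectedness hypothesis on $\G$ carries real content and the argument should be read with $\G$ having at least one edge (so that every block has an adjacent block through which its vertices connect). For $k\ge 2$ the path-lifting above applies verbatim and completes the proof.
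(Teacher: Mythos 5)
Your treatment of the two class conditions is essentially the paper's argument: invoke Lemma \ref{advsim} to place each block $D_i$ inside a single $\sim$-class, note that every class contains some block and hence has size at least $\wt(i)\ge 2$, and observe that a class of size exactly two must be a single block of weight $2$, which is loopless by hypothesis and therefore edgeless. Where you genuinely diverge is connectivity. The paper disposes of it in one sentence (``Since $\G$ is connected, $G$ is also connected'') with no argument, whereas you supply the path-lifting proof, and in doing so you expose a real defect that the paper's sentence glosses over: your degenerate case is correct. For $k=1$ with $\wt(1)\ge 3$ and no loop, $\G$ is a connected one-vertex graph satisfying hypotheses (1) and (2), yet its deconstruction is the edgeless graph $\overline{K^{\wt(1)}}$, which is disconnected and hence not Anosov; so the theorem as literally stated fails there, and the paper's unproved connectivity claim is false in exactly that case. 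Your reading---that the statement should be understood with $k\ge 2$, or with a loop at the unique vertex when $k=1$---is the right repair. One small wrinkle in your lifting argument: when $x,y$ lie in the same loopless block $D_i$, the path from $i$ to $i$ in $\G$ is trivial and lifts to nothing, so you must route through a neighboring block, taking the walk $x,r,y$ with $r\in D_j$ and $ij\in\E$; your parenthetical acknowledges this, but it deserves to be an explicit case, since it is precisely where the hypothesis $k\ge 2$ (or the loop) gets used.
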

\begin{proof} Assume the above conditions for $\G=([k],\E,\wt)$. Let $G=(V,E)$, be the deconstruction of $\G$ with $D_1,D_2,\dots, D_k$ subsets of $V$ as defined in Equation \eqref{subset}. Since $\G$ is connected,  $G$ is also connected. If $\sim$ partitions the vertices of $V$ into classes $P_1,\dots,P_m$, then by Lemma \ref{advsim} for each $i\in [k]$, there exists $j\in[m]$ such that $D_i\subseteq P_j$. Since each $P_j$ is nonempty, it must contain a $D_i$, so we get a well-defined surjective function $\{D_1,\dots,D_k\}\longrightarrow \{P_1,\dots,P_m\}$. Thus $m\leq k$. Since for each $i\in [k]$ there exists $j\in[m]$ such that $D_i\subseteq P_j$, we obtain
\[
\min(|P_1|,\dots,|P_m|)\geq\min(|D_1|,\dots,|D_k|)=\min(\wt(1),\dots,\wt(k)).
\]
 Since we assumed $\wt(i) \ge 2$ for all $i\in[k]$ we have $|P_j|\ge 2$ for all $j\in[m]$. If $|P_j|=2$ then $|D_i|=2$ with $D_i=P_j$ for some $i\in[k]$ and $P_j$ is edgeless by condition (2), since $G[D_i]$ is edgeless so $G$ is Anosov.
\end{proof}

\begin{remark} Conditions (1) and (2) in Theorem \ref{Anosov Criteria} are not necessary conditions for a deconstruction of a graph with a weight function to be Anosov.
\end{remark}

\begin{lemma}
\label{The Brick Graph Criteria}
A graph $\G=([k],\E,\wt)$ is the quotient graph $G/{\sim}$ where $G$ of type $(\wt(1),\dots,\wt(k))$ provided, if $i,j\in[k]$ are distinct, then $ii\in\E$ and $jj\in \E$ implies $\overline{N}_{\mathcal{G}}(i)\neq\overline{N}_{\mathcal{G}}(j)$; similarly, $ii\not\in\E$ and $jj\not\in\E$ implies $N_{\mathcal{G}}(i)\neq N_{\mathcal{G}}(j)$.  In this case, $G/{\sim}=\mathcal{G}$.
\end{lemma}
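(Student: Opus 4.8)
The plan is to produce the required simple graph explicitly and then verify it has the right quotient. The natural candidate is the \emph{deconstruction} $G=(V,E)$ of $\G$ from Definition \ref{mudtog}: it has $V=\bigsqcup_{i\in[k]}D_i$ with $|D_i|=\wt(i)$, the block $D_i$ is complete when $ii\in\E$ and edgeless otherwise, and between distinct blocks $D_i,D_j$ all edges are present exactly when $ij\in\E$. By Lemma \ref{advsim} each block $D_i$ lies in a single $\sim$-class, so every equivalence class of $G$ is a union of blocks. Thus $G$ will have type $(\wt(1),\dots,\wt(k))$ and satisfy $G/{\sim}=\G$ as soon as we know that no two distinct blocks $D_i,D_j$ lie in the same class; establishing this is the whole content of the lemma.

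To control neighborhoods I would first record, for $x\in D_i$, that exactly one of $N(x)$ and $\overline N(x)$ is a union of full blocks, according to the loop at $i$: if $ii\notin\E$ then $N(x)=\bigsqcup_{m\in N_{\G}(i)}D_m$ is a union of blocks while $\overline N(x)=N(x)\cup\{x\}$ is not, whereas if $ii\in\E$ then $\overline N(x)=\bigsqcup_{m\in\overline N_{\G}(i)}D_m$ is a union of blocks while $N(x)=\overline N(x)\setminus\{x\}$ is not. Since each $\wt(m)\ge1$ the blocks are nonempty and disjoint, so the map $A\mapsto\bigsqcup_{m\in A}D_m$ is injective on subsets of $[k]$; this injectivity is the device that converts an equality of neighborhoods in $G$ into an equality of neighborhoods in $\G$.

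Now suppose for contradiction that $x\in D_i$ and $y\in D_j$ with $i\neq j$ and $x\sim y$. A quick membership check---using that $x\in\overline N(x)$ always but $x\notin N(x)$, and that $x\in N(y)$ iff $ij\in\E$---shows that $N(x)=N(y)$ can occur only when $ij\notin\E$, while $\overline N(x)=\overline N(y)$ can occur only when $ij\in\E$, so the two possibilities are disjoint. In the first case, intersecting $N(x)=N(y)$ with $D_i$ and $D_j$ forces $ii\notin\E$ and $jj\notin\E$, after which the displayed formula and injectivity give $N_{\G}(i)=N_{\G}(j)$, contradicting the hypothesis; in the second case, intersecting $\overline N(x)=\overline N(y)$ with $D_i$ and $D_j$ forces $ii\in\E$ and $jj\in\E$, and the formula and injectivity then give $\overline N_{\G}(i)=\overline N_{\G}(j)$, again contradicting the hypothesis. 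Hence the blocks are precisely the $\sim$-classes. Reading off edges then finishes the proof: for $i\neq j$ the classes $D_i,D_j$ are adjacent in $G/{\sim}$ iff some cross edge exists iff $ij\in\E$, and $D_i$ carries a loop iff it is complete iff $ii\in\E$, using the earlier lemma that every class is edgeless or complete. This identifies $G/{\sim}$ with $\G$.

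I expect the genuine obstacle to be the block-separation step, and within it the bookkeeping that forces the loop configuration to match the neighborhood condition actually in play. The most delicate point is the behaviour of singleton blocks: when $\wt(i)=1$ the internal loop becomes invisible in $G$ (a single vertex carries no edge), so the intersection argument no longer pins down whether $ii\in\E$, and one checks that the argument really uses the weights being at least two (as in the Anosov application). I would flag this explicitly rather than bury it in a \emph{without loss of generality}, since it is the hinge on which the clean statement rests.
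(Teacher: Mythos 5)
Your proof is correct and follows essentially the same route as the paper's: take the deconstruction of $\G$, invoke Lemma \ref{advsim} to place each block $D_i$ inside a single $\sim$-class, and rule out two distinct blocks merging by a two-case contradiction (your split by $N$ versus $\overline{N}$ equality is equivalent to the paper's split by $xy\in E$ versus $xy\notin E$) that converts an equality of neighborhoods in $G$ into the forbidden equality of neighborhoods in $\G$. Your explicit block-union formula with its injectivity device, and especially your flagging of the weight-one blocks, are genuine improvements in rigor: the paper's argument silently picks $z\in D_i$ with $z\neq x$, so it too requires $\wt(i)\ge 2$ --- a hypothesis absent from the lemma as stated and only restored later in Theorem \ref{Brick}.
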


\begin{proof}
Assume the above condition for $\G=([k],\E,\wt)$. Let $G=(V,E)$ be the deconstruction of $\mathcal{G}$, with $D_1,D_2,\dots, D_k$ subsets of $V$ as defined in Equation \eqref{subset}.
Assume $\{D_1,\dots, D_k\}$ are the equivalence classes coming from $\sim$. Notice for $i,j\in[k]$, by Definition \ref{mudtog}, we have $ij\in \mathcal{E}$, $x\in P_i$, $y\in P_j$ and $x\in N(y)$, $\wt(i)=\lambda_i$, and $ii\not\in \E$ when $G[D_i]$ is edgeless and $ii\in \E$ when $G[D_j]$ is complete. We claim that $\{D_1,\dots,D_k\}$ are equivalence classes coming from $\sim$. If we can prove this claim, then $\mathcal{G}$ is the quotient graph $G/{\sim}$.
By Lemma \ref{advsim}, we have that $x\sim y$ for all $x,y\in D_i$ for each $i\in[k]$. It suffices to show $x\not\sim y$ provided $i\neq j$ with $x\in D_i$ and $y\in D_j$ for all $i,j\in[k]$.
Suppose, by way of contradiction, that $i\neq j$ but there exists $x\in D_i$ and $y\in D_j$ such that $x\sim y$. We will break this proof into two cases.

\underline{Case 1.} Assume $xy\in E$. Since $xy\in E$ by our deconstruction $ij\in\E$. Let $z\in D_i$, then by transitivity of $\sim$ we have $z\sim y$. Combining $xy\in E$ and $z\sim y$ we have $xz\in E$. Since $G[D_i]$ is not edgeless $G[D_i]$ is complete. By a similar argument we have $G[D_j]$ is complete. Since $G[D_i]$ is complete and $G[D_j]$ is complete we have $ii\in \E$ and $jj\in\E$. However $x\sim y$ so by our construction $i\sim j$ in $\mathcal{G}$ and $\overline N_{\mathcal{G}}(i)=\overline N_{\mathcal{G}}(j)$. This contradicts our hypothesis.

\underline{Case 2.} Assume $xy\not\in E$. Since $xy\not \in E$ by our deconstruction $ij\not\in \E$. Let $z\in D_i$, then by transitivity of $\sim$ we have $z\sim y$. Combining $xy\not\in E$ and $z\sim y$ we have $xz\not\in E$. Since $xz\in E$ we have $G[D_i]$ is edgeless. By a similar argument we have $G[D_j]$ is edgeless. Since $G[D_i]$ is edgeless and $G[D_j]$ is edgeless we have $ii\not\in\E$ and $jj\not\in\E$. However $x\sim y$ so by our construction $i\sim j$ in $\mathcal{G}$  and $N_{\mathcal{G}}(i)=N_{\mathcal{G}}(j)$. This contradicts our hypothesis.

In either case, we arrive at a contradiction. Therefore, $\mathcal{G}$ is the quotient graph $G/{\sim}$.
\end{proof}

\begin{thm}
\label{Brick}
A connected graph $\G=([k],\E,\wt)$ is a quotient graph $G/{\sim}$ where $G$ is Anosov of type $(\wt(1),\dots,\wt(k))$ provided,
\begin{enumerate}
\item $\wt(k)\ge 2$;
\item for all $i\in[k]$, if $\wt(i)=2$, then $ii\not\in\E$;
\item if $i,j\in[k]$ are distinct, then $ii\in\E$ and $jj\in \E$ implies $\overline{N}_{\mathcal{G}}(i)\neq\overline{N}_{\mathcal{G}}(j)$; similarly, $ii\not\in\E$ and $jj\not\in\E$ implies $N_{\mathcal{G}}(i)\neq N_{\mathcal{G}}(j)$.
\end{enumerate}
\end{thm}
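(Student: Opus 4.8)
The plan is to obtain this theorem as a synthesis of Theorem \ref{Anosov Criteria} and Lemma \ref{The Brick Graph Criteria}, both applied to the \emph{single} deconstruction $G=(V,E)$ of $\G$ produced by Definition \ref{mudtog}. Since that definition assigns to $\G$ one specific graph $G$, there is no ambiguity in feeding the same $G$ into both results; the bulk of the work is simply checking that the three listed hypotheses are exactly what those two earlier results require.

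First I would reconcile the weight hypotheses. The conclusion asserts that $G$ is Anosov of type $(\wt(1),\dots,\wt(k))$, and since the type of a graph is by convention a partition, the tuple $(\wt(1),\dots,\wt(k))$ is nonincreasing, i.e.\ $\wt(1)\ge\cdots\ge\wt(k)$. Consequently hypothesis (1), namely $\wt(k)\ge 2$, upgrades to $\wt(i)\ge 2$ for \emph{every} $i\in[k]$. Together with hypothesis (2), this makes conditions (1) and (2) of Theorem \ref{Anosov Criteria} hold verbatim, so that theorem yields that the deconstruction $G$ is Anosov; its proof also records that connectedness of $\G$ forces connectedness of $G$, which is needed for Anosov-ness.

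Next I would invoke Lemma \ref{The Brick Graph Criteria}, whose hypothesis is precisely condition (3) of the present statement. That lemma shows the sets $D_1,\dots,D_k$ from Equation \eqref{subset} are exactly the equivalence classes of $\sim$ on $V$, whence $\G=G/{\sim}$ and $G$ has type exactly $(\wt(1),\dots,\wt(k))$ rather than some coarser partition. Combining the two conclusions, $G$ is a connected Anosov graph of the prescribed type with $\G=G/{\sim}$, which is the assertion.

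I do not expect a genuine obstacle, since the theorem repackages two results already in hand; the only points demanding care are the bookkeeping in the second paragraph—guaranteeing that hypothesis (1) here (with the decreasing order of the type) delivers the stronger ``for all $i$'' hypothesis of Theorem \ref{Anosov Criteria}—and the observation that condition (3) is what prevents distinct $D_i$ from merging, pinning the type down to $(\wt(1),\dots,\wt(k))$ exactly.
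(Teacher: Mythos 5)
Your proposal is correct and matches the paper's proof, which is exactly the one-line combination ``By Theorem \ref{Anosov Criteria}, $G$ is Anosov of type $(\wt(1),\dots,\wt(k))$, and by Lemma \ref{The Brick Graph Criteria}, $G/{\sim}=\mathcal{G}$'' applied to the deconstruction of $\G$. Your extra bookkeeping (using the nonincreasing order of the type to upgrade $\wt(k)\ge 2$ to $\wt(i)\ge 2$ for all $i$) is left implicit in the paper but is the same reading of the hypotheses.
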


\begin{proof} By Theorem \ref{Anosov Criteria}, $G$ is Anosov of type $(\wt(1),\dots,\wt(k))$, and by Lemma \ref{The Brick Graph Criteria}, $G/{\sim}=\mathcal{G}$.
\end{proof}

Now we will improve the lower bound given in \cite{DM}, which is
\begin{equation}\label{firstbound}
\nu(n+m)>\frac{1}{12}(n+m-3\sqrt{2(n+m)}-17),
\end{equation}
where $\nu(n+m)$ is the number of Anosov graphs with $n$ vertices and $m$ edges.

\begin{thm}\label{thm:matt}
The number of Anosov graphs is bounded below in terms of the number of vertices, $n$, and number of edges, $m$, that is,
\begin{equation}\label{lowerbound}
\nu(n+m)> \frac{n+m}{3}-\sqrt{2(n+m)}-\frac{9}{2}.
\end{equation}
\end{thm}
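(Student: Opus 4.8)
The plan is to bound $\nu(n+m)$ from below by exhibiting many pairwise non-isomorphic Anosov graphs whose number of vertices plus number of edges equals the target dimension $d:=n+m$, and to produce them all as deconstructions of weighted quotient graphs via Theorem \ref{Brick}. The engine is the following observation: once a connected weighted graph $\G=([k],\E,\wt)$ satisfies conditions (1)--(3) of Theorem \ref{Brick}, its deconstruction $G$ is Anosov \emph{and} $G/{\sim}=\G$ by Lemma \ref{The Brick Graph Criteria}. Hence the quotient $\G$ is recovered from $G$ as an isomorphism invariant, so \emph{distinct} valid weighted graphs yield \emph{non-isomorphic} Anosov graphs. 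This converts the hard problem of counting Anosov graphs of dimension $d$ into the tractable problem of counting valid weighted quotient graphs of the appropriate size. For the bookkeeping I record that the deconstruction $G$ has $|V|=\sum_i \wt(i)$ vertices, each loop $ii\in\E$ contributes $\binom{\wt(i)}{2}$ edges, and each non-loop $ij\in\E$ contributes $\wt(i)\wt(j)$ edges, so $d=|V|+|E|$ is an explicit function of $(\wt,\E)$.

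The cleanest reservoir of valid quotients comes from taking all weights equal to $2$ with no loops. Then conditions (1) and (2) of Theorem \ref{Brick} hold automatically, every class is edgeless, and condition (3) reduces to requiring $N_{\G}(i)\neq N_{\G}(j)$ for distinct $i,j$. For a \emph{tree} on $k$ nodes this fails only for ``twin leaves'' (two leaves sharing a neighbor), and such a weight-$2$ tree has $d=2k+4(k-1)=6k-4$ \emph{independent of the tree's shape}. Thus all twin-leaf-free trees on $k$ nodes deconstruct to non-isomorphic Anosov graphs of the \emph{same} dimension $6k-4$, and the number of these grows quickly with $k$ (hence with $d$)—far more than the claimed linear amount. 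Consequently the remaining work is not to find enough graphs, but to tune the construction so that it hits \emph{every} $d$ exactly while still admitting a clean closed-form count; for the linear term I would isolate a simple one-parameter structural sub-family, e.g.\ caterpillars obtained by sliding a single pendant class along a weight-$2$ spine, whose members are manifestly distinct quotients.

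To make the bound valid for every $d$ I would introduce one complete ``ballast'' class $K^r$ (a looped node of weight $r\ge 3$) attached to the spine; this adds $\binom{r+1}{2}$ to the dimension plus a controlled cross-term, and varying $r$ lets one absorb the residue of $d$ left over by the weight-$2$ part. Since $\binom{r+1}{2}\le d$ forces $r\lesssim\sqrt{2d}$, the dimension spent on ballast—and hence removed from the linear count—is $O(\sqrt{2d})$, which is precisely the source of the $-\sqrt{2(n+m)}$ term, while the smallest admissible spine length and the parity/boundary corrections produce the additive $-\tfrac{9}{2}$. The main obstacle is exactly this calibration: one must simultaneously guarantee exact attainment of \emph{each} $d$ (not merely a congruence class) as $r$ and the spine parameters range, and extract a provably valid closed-form lower bound of the shape $\tfrac{d}{3}-\sqrt{2d}-\tfrac{9}{2}$. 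Pairwise non-isomorphism is handled uniformly by Lemma \ref{The Brick Graph Criteria}, so the delicate step is the combinatorial optimization that trades the linear spine count against the $O(\sqrt{d})$ ballast to pin down the coefficient $\tfrac{1}{3}$ and the exact constant.
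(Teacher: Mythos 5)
Your setup is sound, and it is in fact the same engine the paper uses: deconstructions of weighted quotient graphs, with Theorem \ref{Brick} (via Lemma \ref{The Brick Graph Criteria}) guaranteeing both that the deconstruction is Anosov and that distinct valid weighted graphs yield non-isomorphic Anosov graphs. Your dimension bookkeeping and your analysis of loopless weight-$2$ quotients (condition (3) reduces to distinctness of open neighborhoods, trees fail only at twin leaves, every weight-$2$ tree on $k$ nodes has dimension $6k-4$) are all correct. The genuine gap is that the quantitative heart of the theorem --- exhibiting, for \emph{every} value of $w=n+m$, at least $\frac{w}{3}-\sqrt{2w}-\frac{9}{2}$ such quotients --- is exactly the step you defer as ``calibration,'' and that step \emph{is} the theorem. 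The paper carries it out concretely: it takes a hub class of \emph{small} weight $q$, chosen from $\{2,3\}$ or $\{3,4\}$ according to the parity of $w$ (unlooped in one family, looped in the other, so the two families are disjoint), attached by a single edge to a connected loopless weight-$2$ graph on $k$ nodes with $p$ edges. Then $w$ collapses to the linear relation $\alpha=k+2p$ with $\alpha\approx w/2$, and counting the integers $k$ in $\left[\sqrt{\alpha},\frac{\alpha+2}{3}\right]$ (coming from $k-1\le p\le\binom{k}{2}$) gives $\frac{1}{3}(\alpha-3\sqrt{\alpha})$ graphs per family; summing the two families yields precisely the claimed bound. Note in particular that the $-\sqrt{2w}$ term arises from the edge-capacity constraint $p\le\binom{k}{2}$ on the weight-$2$ part, not, as you suggest, from a ballast class of weight $\sim\sqrt{2d}$: every class in the paper's construction has weight at most $4$.

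Moreover, the specific family you sketch would not close the gap as described. Sliding a pendant class along a spine of fixed length produces only about $d/12$ quotients per dimension, short of the coefficient $\frac{1}{3}$; and the residue problem is real: a weight-$2$ tree contributes $6k-4\equiv 2\pmod 6$, while a looped ballast class of weight $r$ attached by one edge to a weight-$2$ node adds $\binom{r+1}{2}+2r=r(r+5)/2$, whose residues modulo $6$ are only $\{0,1,3,4\}$. Hence dimensions $d\equiv 1,4\pmod 6$ are unreachable by that family, so ``exact attainment of each $d$'' genuinely fails without further devices (several attachment edges, non-tree spines, odd-weight classes), and those devices together with the resulting closed-form count are precisely what is missing. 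What your proposal does contain that the paper does not is the observation that on the progression $d\equiv 2\pmod 6$ the number of Anosov graphs is at least the (exponentially growing) number of twin-leaf-free trees; pushed through the missing calibration, that idea would give a far stronger bound than the linear one being proved, but as written it does not prove the stated inequality.
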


\begin{proof}
Let $\G=([k],\E,\wt)$, such that $\wt(0)=q\geq 2$, $\wt(i)=2$ for all $i\in[k]$, $ii\not\in\E$ for all $i\in\{0\}\cup[k]$, and there is only one edge connecting the vertex $0$ to the graph.  By Theorem \ref{Anosov Criteria}, if $G=(V,E)$ is the deconstruction of $\G$ then $G$ is Anosov. Let $p$ be the number of edges in $\mathcal{G}[\{1,\dots,k\}]$, but there is only one edge connecting $0$ to the rest of $\mathcal{G}$ so $p=|\E|-1$. For $i, j \in [k]$, consider the induced subgraph $\mathcal{G}[i,j]$ with $ij\in \mathcal{E}$:
\[
\begin{tikzpicture}[font=\small,baseline=-4]
\node[op,label={left:$2$}] (1) at (0,.5) {};
\node[op,label={left:$2$}] (2) at (0,-.5) {};
\path[-]
 (1) edge (2);
\end{tikzpicture}
\hspace{5em}
\begin{tikzpicture}[font=\small,baseline=-4]
\node[vertex,label={below:$$}] (1) at (0,.5) {};
\node[vertex,label={below:$$}] (2) at (0,-.5) {};
\node[vertex,label={below:$$}] (3) at (1,.5) {};
\node[vertex,label={below:$$}] (4) at (1,-.5) {};
\path[-]
 (1) edge (2)
 (1) edge (4)
 (3) edge (2)
 (3) edge (4) ;
\end{tikzpicture}
\]
From this we notice between two connected vertices in $\mathcal{G}[\{1,\dots,k\}]$ are 4 edges in $\mathcal{D}(\mathcal{G})$. Similarly consider the subgraph  $\mathcal{G}[0,i]$ where $0i\in\E$:
\[
\begin{tikzpicture}[font=\small,baseline=-4]
\node[op,label={left:$q$}] (1) at (0,1) {};
\node[op,label={left:$2$}] (2) at (0,0) {};
\path[-]
 (1) edge (2);
\end{tikzpicture}
\hspace{5em}
\begin{tikzpicture}[font=\small,baseline=-4,xscale=1.5]
\node[vertex,label={below:$$}] (1) at (.5,0) {};
\node[vertex,label={below:$$}] (2) at (-.5,0) {};
\node[vertex,label={below:$$}] (3) at (1.5,1) {};
\node[vertex,label={below:$$}] (4) at (-1.5,1) {};

\node[vertex,label={center:$$}] (5) at (1,1) {};
\node[circle,fill,scale=0,label={center:$\cdots$}] (6) at (0,1) {};
\node[vertex,label={center:$$}] (11) at (-1,1) {};

\node[vertex,label={below:$$}] (7) at (-1,1) {};
\node[vertex,label={below:$$}] (8) at (-.5,1) {};
\node[vertex,label={below:$$}] (9) at (.5,1) {};
\node[vertex,label={below:$$}] (10) at (1,1) {};

\path[-]
 (1) edge (3)
 (1) edge (4)
 (2) edge (3)
 (2) edge (4)
 (1) edge (7)
 (1) edge (8)
 (1) edge (9)
 (1) edge (10)
 (2) edge (7)
 (2) edge (8)
 (2) edge (9)
 (2) edge (10) ;
\end{tikzpicture}
\]
Here we notice the number of edges between the vertex $0$ and the vertex $0$ is connected with is $2q$ in $G$. Hence the number of edges, $m$, in $G$ is $4p+2q$ and the number of vertices, $n$, is $2k+q$, or rather, $n+m=2k+3q+4p$. Let $w=n+m$. We will assume $w>9$. Now we will construct  Anosov graphs for given $w=n+m$.  We choose $q=2$ if $w$, is even and $q=3$ if $w$ is odd. In the case where $w$ is even we have $2k+3q+4p=2k+6+4p$, so $w=2k+6+4p$, or rather,
\[
\frac{w-6}{2}=k+2p.
\]
Similarly if $w$ is odd,
\[
\frac{w-9}{2}=k+2p.
\]
For simplicity,
\[
\alpha=
\begin{cases}
    \frac{w-6}{2}& \text{if } w \text{ is even,}\\
    \frac{w-9}{2}& \text{if } w \text{ is odd.}
\end{cases}
\]

We need a restriction on $p$ and $k$ to assure we have enough edges to make $\G$ connected but not too many edges to exceed maximum allowed amount of edges. The minimum number of edges that guarantee $\mathcal{G}$ is connected is $k-1$. On the other hand, the maximum amount of edges a graph can have is $\binom{k}{2}$, so
\[
k-1\leq p\leq \binom{k}{2}.
\]
From $w=2k+6+4p$, we have $p=\frac12(\alpha-k)$.
To get an upper bound on $k$, replace $p$ with $k-1$ to obtain,
\begin{align*}
k-1\leq \frac{\alpha-k}{2}&
\Longrightarrow k\leq \frac{\alpha+2}{3}.
\end{align*}
Similarly, to get a lower bound on $k$, replace $p$ with $\binom{k}{2}$ to obtain,
\begin{align*}
\binom{k}{2}=\frac{k(k-1)}{2}\geq \frac{\alpha-k}{2}
&\Longrightarrow k\geq \sqrt{\alpha}.
\end{align*}
For each integer $k$ in the interval $\left[\sqrt{\alpha},\frac{\alpha+2}{3}\right]$, we can choose a value for $p$ such that we can construct an Anosov graph. The number of integers between $\sqrt{\alpha}$ and $\frac{\alpha+2}{3}$ is given by
\[
\floor*{\frac{\alpha+2}{3}}-\ceil*{\sqrt{\alpha}}+1,
\]
where for a real number $r$, $\floor*{r}$ denotes the largest integer not greater than $r$  and   $\ceil*{r}$ denotes the smallest integer not less than $r$.
We know that the decimal part of $\frac{\alpha+2}{3}$ is at most $\frac{2}{3}$ since $\alpha$ is an integer, so $\floor*{\frac{\alpha+2}{3}} \geq \frac{\alpha+2}{3}-\frac{2}{3}$ and thus
\[
\floor*{\frac{\alpha+2}{3}}-\ceil*{\sqrt{\alpha}}+1\geq \left(\frac{\alpha+2}{3}-\frac{2}{3}\right)-(\sqrt{\alpha}+1)+1=\frac{\alpha+2}{3}-\sqrt{\alpha}-\frac{2}{3}.
\]
So the number of integer values between $\sqrt{\alpha}$ and $\frac{\alpha+2}{3}$ is at least $\frac{1}{3}(\alpha-3\sqrt{\alpha})$. Therefore,
\[
\nu(w)>\frac{1}{3}(\alpha-3\sqrt{\alpha}).
\]
We know $w-9< w-6< w$, for all $w$.
Therefore
\[
\frac{1}{3}(\alpha-3\sqrt{\alpha}) > \frac{1}{3} \left(\frac{w-9}{2}-3\sqrt{\frac{w}{2}}\right)=\frac{1}{6}(w-3\sqrt{2w}-9).
\]

We repeat the process with a different class of Anosov graphs. Suppose we set $00\in\E$ and $\wt(0)=q\geq 3$ in $\mathcal{G}$.  Then the number of edges in $G$ is $\frac{1}{2}(q^2+3q+8p)$ and the number of vertices in $G$ is $2k+q$ so $w=\frac{1}{2}(4k+q^2+5q+8p)$. Let $w>18$ be given. If $w$ is odd we choose $q=3$, otherwise we choose $q=4$. Therefore $w=2k+12+4p$ or $w=2k+18+4p$; i.e.,
\[
\frac{w-12}{2}=k+2p\hspace{2em}\text{ or }
\hspace{2em}
\frac{w-18}{2}=k+2p.
\]
Let
\[
\alpha=
\begin{cases}
    \frac{w-18}{2}& \text{if } w \text{ is even,}\\
    \frac{w-12}{2}& \text{if } w \text{ is odd.}
\end{cases}
\]
If $k\in\left[\sqrt{\alpha},\frac{\alpha+2}{3}\right]$, then any $p$ value which satisfies $\alpha=k+2p$ also satisfies $k-1\le p \le \binom{k}{2}$. Since we are working with graphs, $k$ must be an integer and the number of integer values between $\sqrt{\alpha}$ and $\frac{\alpha+2}{3}$ is at least $\frac{1}{3}(\alpha-3\sqrt{\alpha})$. Therefore,
$$\nu(w)\ge \frac{1}{3}(\alpha-3\sqrt{\alpha}).$$
We know $w-18 < w-12 < w,$ for all $w$. Therefore
\[
\frac{1}{3}(\alpha-3\sqrt{\alpha}) > \frac{1}{3}\left(\frac{w-18}{2}-3\sqrt{\frac{w}{2}}\right) = \frac{1}{6}(w-3\sqrt{2w}-18),
\]
and thus
\[
\nu(w)> \frac{1}{6}(w-3\sqrt{2w}-18).
\]
Since we counted two distinct families we can add the lower bounds from both cases together to get,
\[
\nu(w)>\frac{w}{3}-\sqrt{2w}-\frac{9}{2}.
\]
But, as we stated before $w=n+m$ this completes the proof.
\end{proof}

\section{Bounds on the number of Anosov graphs}\label{boundsAnosov}

For the following upper and lower bounds we will need sufficient conditions for a graph with a weight function to be the quotient graph of an Anosov graph in terms of its adjacency matrix. However all we need for this is an alteration of our previous result Theorem $\ref{Brick}$.
\begin{thm}
\label{brickmatrix}
A connected graph $\G=([k],\E,\wt)$ is a quotient graph $G/{\sim}$ where $G$ is Anosov of type $(\wt(1),\dots,\wt(k))$ provided,
\begin{enumerate}
\item $\wt(i)\ge 2$ for all $i\in[k]$;
\item for all $i\in[k]$ if $\wt(i)=2$ then $ii\not\in\E$;
\item no two rows in the adjacency matrix $A(\G)$ repeat.
\end{enumerate}
\end{thm}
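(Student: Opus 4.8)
The plan is to deduce Theorem \ref{brickmatrix} directly from Theorem \ref{Brick} by checking that hypotheses (1)--(3) of the former imply hypotheses (1)--(3) of the latter. Hypothesis (1) here, $\wt(i)\ge 2$ for all $i\in[k]$, is stronger than the requirement $\wt(k)\ge 2$ in Theorem \ref{Brick}, so it transfers immediately; and hypothesis (2) is verbatim the same in both statements. Thus the entire content of the theorem lies in showing that the condition ``no two rows of $A(\G)$ repeat'' forces the neighborhood-distinctness condition (3) of Theorem \ref{Brick}. Once that implication is established, Theorem \ref{Brick} yields that $\G=G/{\sim}$ for an Anosov graph $G$ of type $(\wt(1),\dots,\wt(k))$.

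The bridge between the two formulations of condition (3) is an elementary translation between rows of the adjacency matrix and neighborhoods. Writing $A=A(\G)$, the entry $A_{il}$ equals $1$ exactly when $il\in\E$, so the support of row $i$, namely $\{\,l\in[k]:A_{il}=1\,\}$, is precisely $N_{\G}(i)=\{\,l:il\in\E\,\}$. In particular the diagonal entry $A_{ii}$ records whether the loop $ii$ lies in $\E$, i.e.\ whether $i\in N_{\G}(i)$. Consequently, when $ii\in\E$ we have $i\in N_{\G}(i)$ and hence $\overline{N}_{\G}(i)=N_{\G}(i)$, whereas when $ii\notin\E$ the support of row $i$ omits the position $i$. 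The upshot, since the entries are all $0$ or $1$, is that two rows of $A$ coincide if and only if the corresponding vertices have equal open neighborhoods in $\G$.

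With this dictionary I would argue the remaining implication by contraposition. Suppose condition (3) of Theorem \ref{Brick} fails for some distinct $i,j$. In the first sub-case $ii,jj\in\E$ and $\overline{N}_{\G}(i)=\overline{N}_{\G}(j)$; by the previous paragraph this gives $N_{\G}(i)=N_{\G}(j)$, so rows $i$ and $j$ have identical support and are therefore equal, violating hypothesis (3). In the second sub-case $ii,jj\notin\E$ and $N_{\G}(i)=N_{\G}(j)$, which again says rows $i$ and $j$ have the same support and hence coincide, the same contradiction. Either way hypothesis (3) of Theorem \ref{brickmatrix} is contradicted, completing the contrapositive. The one place demanding care --- and the only genuine obstacle --- is the bookkeeping of the diagonal: one must invoke the loop convention to see that $\overline{N}_{\G}=N_{\G}$ exactly on the looped vertices, so that both sub-cases of Theorem \ref{Brick}'s condition (3) collapse to the single statement ``equal open neighborhood, equivalently equal row.'' As a sanity check, any two equal rows must satisfy $A_{ii}=A_{jj}$, since $A_{ii}=A_{ji}=A_{ij}=A_{jj}$ by symmetry of $A$; hence rows with differing diagonal entries (one looped vertex, one not) can never coincide, matching the fact that Theorem \ref{Brick}'s condition (3) imposes no requirement in that situation.
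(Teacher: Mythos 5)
Your proposal is correct and follows essentially the same route as the paper: both reduce Theorem \ref{brickmatrix} to Theorem \ref{Brick} and show, via the identification of row $i$ of $A(\G)$ with the indicator of $N_{\G}(i)$, that a failure of the neighborhood-distinctness condition in either sub-case (both vertices looped, or both unlooped) forces two identical rows. Your explicit remark that $ii\in\E$ gives $\overline{N}_{\G}(i)=N_{\G}(i)$, and the sanity check that equal rows force equal diagonal entries, are nice clarifications of bookkeeping the paper handles entry-by-entry, but the argument is the same.
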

\begin{proof}
Let $A(\G) = (a_{ij})$.  By Theorem \ref{Brick}, we need to show that having no two rows in $A(\G)$ repeat implies: if $i,j\in[k]$ are distinct, then $ii\in\E$ and $jj\in \E$ implies $\overline{N}_{\mathcal{G}}(i)\neq\overline{N}_{\mathcal{G}}(j)$.  Similarly, $ii\not\in\E$ and $jj\not\in \E$ implies $N_{\mathcal{G}}(i)\neq N_{\mathcal{G}}(j)$. Let $i\neq j$, $ii\in\E$, $jj\in \E$ and assume by way of contradiction that $\overline N_{\G}(i)=\overline N_{\G}(j)$. Since $\overline{N}(i)=\overline{N}(j)$ we have $a_{in}=a_{jn}$ for all $n\in[k]$, but this would result in two rows being identical.  Similarly, let $i\neq j$, $ii\not\in\E$, $jj\not\in \E$ and assume by way of contradiction that $N(i)=N(j)$.  Since $ii\not\in\E$ and $jj\not\in \E$, we know $a_{ii}=0$ and $a_{jj}=0$.  Since $N(i)=N(j)$, we know $a_{ij}=0$, $a_{ji}=0$, and $a_{in}=a_{jn}$ for all $n\in[k]$. This results in two rows being identical in $A(\G)$, which is a contradiction.\end{proof}

\begin{lemma} \label{countinglambda}
Let $n\in \NN$ and $\lambda\vdash n$. If $G = (V, E)$ is a graph with $|V|=\ell(\lambda)$ then the number of ways to map the parts of $\lambda$ to the vertices of $G$ is
\[
\frac{\ell(\lambda)!}{\prod_{i=1}^{\lambda_1} k(\lambda,i)!},
\]
where  $ \lambda = (\lambda_1, \dots, \lambda_{\ell(\lambda)})$ and $k(\lambda,i)=|\{\lambda_j:\lambda_j=i\}|$.
\end{lemma}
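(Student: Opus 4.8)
The plan is to read ``the number of ways to map the parts of $\lambda$ to the vertices of $G$'' as the number of distinct weight functions $\wt\colon V\longrightarrow\NN$ whose multiset of output values $\{\wt(v):v\in V\}$ equals the multiset of parts $\{\lambda_1,\dots,\lambda_{\ell(\lambda)}\}$, and then to recognize the claimed count as the standard multinomial coefficient enumerating permutations of a multiset. First I would fix a labeling $v_1,\dots,v_{\ell(\lambda)}$ of the $\ell(\lambda)$ vertices of $G$ and consider all bijections from the index set $[\ell(\lambda)]$, which indexes the parts in their listed order $\lambda_1,\dots,\lambda_{\ell(\lambda)}$, onto $V$. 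There are exactly $\ell(\lambda)!$ such bijections, and each one induces a weight function by assigning to the vertex that receives index $j$ the value $\lambda_j$.

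The next step is to determine when two of these bijections induce the \emph{same} weight function. This happens precisely when the two bijections differ by a permutation of the index set that fixes the value $\lambda_j$ of each index $j$, that is, a permutation that only interchanges indices carrying equal parts. The set of such value-preserving permutations forms a subgroup isomorphic to a direct product of symmetric groups, one factor for each distinct part-value $i$, and its order is $\prod_{i=1}^{\lambda_1} k(\lambda,i)!$, since $k(\lambda,i)$ counts the parts equal to $i$. Here the upper index $\lambda_1$ is harmless for values not occurring as a part, because then $k(\lambda,i)=0$ and $0!=1$ contributes trivially to the product.

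Finally I would invoke the orbit counting (equivalently, a double-counting) principle: the $\ell(\lambda)!$ bijections are partitioned into fibers over the distinct weight functions, and every fiber has the same size $\prod_{i=1}^{\lambda_1} k(\lambda,i)!$ by the preceding paragraph. Dividing the total by the common fiber size yields exactly $\frac{\ell(\lambda)!}{\prod_{i=1}^{\lambda_1} k(\lambda,i)!}$ distinct weight functions, which is the asserted formula. I do not expect a substantive obstacle here; the only point requiring care is the initial interpretation, namely insisting that ``mapping parts to vertices'' records the resulting weight assignment rather than a labeled bijection, so that two parts of equal value are genuinely indistinguishable and the division by $\prod_{i} k(\lambda,i)!$ is justified.
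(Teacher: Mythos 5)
Your proof is correct and takes essentially the same approach as the paper: both interpret ``mapping parts to vertices'' as counting permutations of the multiset of parts, yielding the multinomial coefficient $\ell(\lambda)!/\prod_{i=1}^{\lambda_1} k(\lambda,i)!$. The only difference is that the paper simply cites the standard multiset-permutation formula, whereas you derive it by fibering the $\ell(\lambda)!$ labeled bijections over the induced weight functions and dividing by the common fiber size $\prod_{i=1}^{\lambda_1} k(\lambda,i)!$ --- which is just the standard proof of the fact the paper quotes.
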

\begin{proof}
 The number of ways to order $n$ objects in which $n_i$ of  them are of type $i$ for all $i\in[r]$ is
\[
\frac{n!}{n_1!n_2!\dots n_r!},
\]
where $r$ is the number of different types. Hence, in our case we have,
\[
\frac{\ell(\lambda)!}{\prod_{i=1}^{\lambda_1} k(\lambda,i)!}. \qedhere
\]
 \end{proof}

\begin{dfn} Define $X(t)$ to be the number of $t\times t$ symmetric binary matrices with no repeating rows up to graph equivalence.
\end{dfn}
\begin{thm}  \label{The big one}
Let $a(n)$ be the number of Anosov graphs on $n$ vertices and $\Lambda_j(n)=\{\lambda : \lambda \vdash n\text{ and }\lambda_{\ell(\lambda)}\ge j\}$. Then
\[
\frac{1}{2}\sum_{\lambda\in\Lambda_3(n)} X(\ell(\lambda))\le a(n) \le \sum_{\lambda\in\Lambda_2(n)} \frac{\ell(\lambda)!}{\prod_{i=1}^{\lambda_1} k(\lambda,i)!}X(\ell(\lambda)),
\]
where $k(\lambda,i)=|\{\lambda_j:\lambda_j=i\}|$.
\end{thm}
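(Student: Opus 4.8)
The plan is to exploit the near-bijection between Anosov graphs and their weighted quotient graphs. Sending an Anosov graph $G$ of type $\lambda=(\lambda_1,\dots,\lambda_k)$ to the pair $(G/{\sim},\wt)$ is injective on isomorphism classes, since $G$ is recovered as the deconstruction of $G/{\sim}$ (Definition \ref{mudtog}), reading a loop at a class as that class being complete. Distinct equivalence classes have distinct open or closed neighborhoods, which forces the corresponding rows of the adjacency matrix $A(G/{\sim})$ to differ; thus the underlying unweighted graph of $G/{\sim}$ is one of the $X(\ell(\lambda))$ symmetric binary matrices with no repeating rows counted by $X$. Since $G$ is Anosov we have $\lambda_k\ge 2$, so $\lambda\in\Lambda_2(n)$, and $ii\notin\E$ whenever $\wt(i)=2$. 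Conversely, Theorem \ref{brickmatrix} says that every connected such weighted graph with all weights $\ge 2$ and no loop on a weight-$2$ vertex is the quotient of an Anosov graph. I would obtain the two bounds by counting these weighted graphs from above and below.

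For the upper bound, I would fix $\lambda\in\Lambda_2(n)$. The underlying graph of a weighted quotient of type $\lambda$ lies in one of $X(\ell(\lambda))$ isomorphism classes, and for each fixed underlying graph the parts of $\lambda$ can be distributed among its $\ell(\lambda)$ vertices in at most $\ell(\lambda)!/\prod_{i=1}^{\lambda_1}k(\lambda,i)!$ ways by Lemma \ref{countinglambda}; passing to isomorphism classes of weighted graphs only decreases this count. Hence the number of Anosov graphs of type $\lambda$ is at most $\frac{\ell(\lambda)!}{\prod_{i=1}^{\lambda_1}k(\lambda,i)!}\,X(\ell(\lambda))$, and summing over $\lambda\in\Lambda_2(n)$ gives the upper bound.

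For the lower bound, I would restrict to $\lambda\in\Lambda_3(n)$. Since no part equals $2$, condition (2) of Theorem \ref{brickmatrix} is automatic, as are conditions (1) and (3) for any no-repeating-rows symmetric binary matrix weighted by $\lambda$; thus every such matrix on $\ell(\lambda)$ vertices whose deconstruction is connected yields, after weighting by $\lambda$, an Anosov graph of type $\lambda$. Choosing one weighting per quotient type gives pairwise non-isomorphic Anosov graphs, since different types $\lambda$ give different graphs and, for fixed $\lambda$, non-isomorphic underlying quotient graphs give non-isomorphic weighted graphs. Therefore $a(n)\ge\sum_{\lambda\in\Lambda_3(n)}c(\ell(\lambda))$, where $c(t)$ denotes the number of the $X(t)$ types whose deconstruction is connected.

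It remains to show $c(t)\ge\tfrac12 X(t)$, which is where the factor $\tfrac12$ enters and where the main care is needed. I would use complementation $a_{ij}\mapsto 1-a_{ij}$ applied to \emph{every} entry, including the diagonal: two rows coincide after complementing exactly when they coincided before, so this is an involution on the symmetric binary matrices with no repeating rows and therefore permutes the $X(t)$ isomorphism types. For $t\ge 2$, a type whose deconstruction is disconnected has disconnected underlying graph, whose complement is connected, so complementation injects the disconnected types into the connected ones and forces $c(t)\ge\tfrac12 X(t)$. The degenerate case $t=1$ is handled separately: the two types are the single vertex with and without a loop, $X(1)=2$, complementation swaps them, and only the looped one deconstructs to the connected graph $K^{\lambda_1}$, so again $c(1)=1=\tfrac12 X(1)$. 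Summing $\tfrac12 X(\ell(\lambda))$ over $\lambda\in\Lambda_3(n)$ completes the proof. The main obstacle is exactly this final step: confirming that complementation preserves the no-repeating-rows condition and uniformly turns disconnected deconstructions into connected ones, including the handling of loops and the one-vertex case.
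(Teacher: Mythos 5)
Your proof is correct and takes essentially the same approach as the paper: the upper bound by pairing the $X(\ell(\lambda))$ quotient types with the weight-placement count of Lemma \ref{countinglambda}, and the lower bound by a complementation involution exploiting that the complement of a disconnected graph is connected. The only minor difference is where that involution acts --- you complement the $\ell(\lambda)\times\ell(\lambda)$ quotient matrices (diagonal included) to get $c(t)\ge\tfrac{1}{2}X(t)$ before deconstructing, whereas the paper complements the $n$-vertex graphs of type $\lambda$ with $\lambda_{\ell(\lambda)}\ge 3$ (its intermediate quantity $J(n)$) after deconstructing; the two are logically equivalent, and your version is if anything more careful, since it verifies that complementation preserves the no-repeating-rows condition and treats the one-vertex case explicitly.
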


\begin{proof} By Proposition \ref{UpandLow}, if we show
\[U(n)\le\sum_{\lambda\in\Lambda_2(n)} \frac{X(\ell(\lambda)) \ell(\lambda)!}{\prod_{i=1}^{\lambda_1} k(\lambda,i)!}\hspace{2em}\text{ and }\hspace{2em}L(n)\ge\frac{1}{2}\sum_{\lambda\in\Lambda_3(n)} X(\ell(\lambda)),\]
then we are done. Let $n\in \NN$ be given.
By Theorem \ref{brickmatrix} each matrix counted in $X(t)$ is a distinct adjacency matrix of an Anosov graph provided we attach the appropriate weights.
By Lemma \ref{countinglambda}, we know the number of graphs of type $\lambda$ is at most
\[
\frac{\ell(\lambda)!}{\prod_{i=1}^{\lambda_1} k(\lambda,i)!}X(\ell(\lambda)).
\]
Sum over all partitions of $n$ with part size greater than or equal to  2 to obtain,
\[
U(n)\leq \sum_{\lambda \in\Lambda_2(n)}\frac{\ell(\lambda)!}{\prod_{i=1}^{\lambda_1} k(\lambda,i)!}X(\ell(\lambda)).
\]
We can use a similar strategy for a lower bound on $L(n)$. Let $J(n)$ be the number of graphs of type $(\lambda_1,\dots,\lambda_k)$ where $\lambda_k\geq 3$, not necessarily connected. It can be seen that a graph is connected if the complement of the graph is disconnected.  At worst, half of the graphs we count in $J(n)$ are disconnected, so
\[
\frac{1}{2} J(n) \leq L(n).
\]
Unlike in the previous part, we cannot guarantee that rearranging weights with a adjacency matrix produces two district graphs. In fact, there are partitions that, when used as weights, produce the same graph no matter how they are arranged. Thus, the best bound we can obtain by arranging weights is,
\[
\sum_{\lambda\in\Lambda_3(n)} X(\ell(\lambda)) \le J(n),
\]
which gives us
\[
\frac{1}{2} \sum_{\lambda\in\Lambda_3(n)} X(\ell(\lambda)) \le L(n). \qedhere
\]
\end{proof}

\begin{remark}
If we omit the restriction that a graph must be connected to be Anosov, then the lower bound from Theorem \ref{The big one} changes. In the proof of Theorem \ref{The big one}, we found a lower bound on $J(n)$, the number of graphs of type $\lambda=(\lambda_1,\dots,\lambda_k)$, where $\lambda_k\geq 3$.  However, since we no longer need connectedness for a graph to be Anosov, we have that every graph of type $\lambda=(\lambda_1,\dots,\lambda_k)$, where $\lambda_k\geq 3$, is Anosov. Hence $J(n)\leq a(n)$ and
\[\sum_{\Lambda_3(n)} X(\ell(\lambda))\leq a(n).\]
\end{remark}


A major caveat of these previous proofs is that there is no explicit definition for $X(t)$. However, we can use lower and upper bounds on $X(t)$ to take its place in the proof above. Since each matrix counted in $X(t)$ is symmetric, we can use the number of $t\times t$ binary symmetric matrices as an upper bound, $2^{{t+1}\choose{2}}/t!$. We know $2^{{t+1}\choose{2}}$ is the number of graphs with $t$ vertices, so at worst there are $t!$ graphs which are isomorphic to each other for each isomorphism class.

 Similarly we can use $2^t$ as a lower bound for $X(t)$.

\begin{lemma} \label{weak lower bound}
 Given a  binary string of length $t$, we can construct a unique $t\times t$ symmetric binary matrix with no repeating rows.
\end{lemma}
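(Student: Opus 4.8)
The plan is to exhibit an explicit, deterministic rule $b \mapsto A(b)$ sending a binary string $b = (b_1,\dots,b_t) \in \{0,1\}^t$ to a symmetric matrix, and then to check that $A(b)$ never repeats a row. I would set the diagonal to carry the string, $a_{ii} = b_i$, and fill the off-diagonal entries with a single fixed symmetric pattern $H = (h_{ij})$ that does not depend on $b$. Since the recipe is completely determined by $b$, the output is unique, which disposes of the word ``unique'' in the statement; the only real content is then the no-repeated-rows condition, and symmetry of $A(b)$ is immediate because $H$ is symmetric and the diagonal contributes only to the positions $a_{ii}$.

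For the distinct-rows step, the key observation is that the diagonal can influence rows $i$ and $j$ only in columns $i$ and $j$ themselves: comparing the two rows, one finds $a_{ik} = h_{ik}$ and $a_{jk} = h_{jk}$ for every $k \notin \{i,j\}$, while only the four remaining entries involve the bits $b_i, b_j$. Hence it suffices to choose $H$ with the \emph{third-vertex separation} property that for every pair $i \neq j$ there is some $k \notin \{i,j\}$ with $h_{ik} \neq h_{jk}$; this forces rows $i$ and $j$ to differ regardless of $b$. A concrete choice is the path graph on the vertices $1,2,\dots,t$ (edges $\{i,i+1\}$): nonadjacent vertices are separated by a suitable path-neighbor lying outside $\{i,j\}$, and an adjacent pair $\{i,i+1\}$ is separated by $i-1$ or $i+2$. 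I would verify these cases, checking the few boundary configurations (endpoints, distance-two pairs) directly, to conclude that $A(b)$ has no repeated rows for every $b$.

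The step I expect to be delicate is precisely the choice of $H$. The separation property must hold \emph{simultaneously for all diagonals}, and it is easy to pick a plausible-looking pattern that fails: a monotone threshold pattern such as $h_{ij} = 1 \iff i + j \le t$ has pairs near the middle whose off-diagonal neighborhoods coincide except at one of the two vertices themselves, so the only separating column is forbidden and two rows can collide. A second, more serious subtlety arises when the lemma is used to conclude $X(t) \ge 2^t$: reading off the diagonal recovers $b$ from the \emph{labeled} matrix, but $X(t)$ counts matrices only up to graph equivalence, so one must ensure the $2^t$ constructed matrices are pairwise nonisomorphic. This is where I would require $H$ to be \emph{rigid} (asymmetric): then any isomorphism $A(b) \to A(b')$ restricts to an automorphism of the loop-free graph $H$, hence is the identity, forcing $b = b'$ and making the loop-pattern an isomorphism invariant. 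A path is not rigid (it has the reversal symmetry), so I would instead take a rigid separating graph — for instance a path with one extra chord breaking the reversal — for large $t$, and dispatch the finitely many small $t$ by direct inspection.
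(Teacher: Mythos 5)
Your reduction of the problem to a property of the fixed off-diagonal pattern $H$ is sound, and it is genuinely different from the paper's argument: with $a_{ii}=b_i$, rows $i$ and $j$ of $A(b)$ coincide exactly when $b_i=b_j=h_{ij}$ and $h_{ik}=h_{jk}$ for every $k\notin\{i,j\}$, so your third-vertex separation property is precisely what is needed. The genuine gap is the claim that the path has this property once the boundary configurations are checked. It fails for $t=3$: for the pair $\{1,3\}$ the only available column is $k=2$, and $h_{12}=h_{23}=1$, so rows $1$ and $3$ of $A(b)$ are $(b_1,1,0)$ and $(0,1,b_3)$, which coincide whenever $b_1=b_3=0$. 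Worse, no choice of $H$ can work there: separating the three pairs requires $h_{13}\neq h_{23}$, $h_{12}\neq h_{23}$, and $h_{12}\neq h_{13}$, i.e., three pairwise distinct elements of $\{0,1\}$, which is impossible; and for $t=2$ there is no third column at all, so rows $1$ and $2$ coincide for $b=(h_{12},h_{12})$. Thus the diagonal-carrying template is unsalvageable for $t=2,3$, not merely in need of a cleverer $H$, and these values matter because the lemma is invoked with $t=\ell(\lambda)$, which is frequently $2$ or $3$. Your closing fallback of direct inspection for small $t$ would patch this, but note that what forces the patch is the separation property, not rigidity, which is where you located the small-$t$ difficulty. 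For $t\geq 4$ your verification of the path does go through.

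For comparison, the paper sidesteps all case analysis by carrying the string off the diagonal: it sets $a_{ij}=\alpha_{\max(i,j)}$ for $i\neq j$ and $a_{ii}=\overline{\alpha}_i$. Then for $i<j$ rows $i$ and $j$ differ already in column $j$, where they read $\alpha_j$ and $\overline{\alpha}_j$ respectively; this one-line argument is uniform in $t$, and injectivity is immediate since the first row $(\overline{\alpha}_1,\alpha_2,\dots,\alpha_t)$ determines $\alpha$. Your rigidity discussion concerns the application rather than the lemma itself, and there it is a real potential improvement: the paper only concludes $2^t/t!\le X(t)$, dividing by $t!$ because distinct matrices may represent the same graph, whereas a rigid separating $H$ would make the loop pattern an isomorphism invariant and give $2^t\le X(t)$ for large $t$. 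If you pursue that, the remaining burden is to actually exhibit such an $H$ (your path-plus-chord is plausible but its automorphism group and separation property both need verification); the paper's own matrices cannot serve this purpose, since their off-diagonal parts vary with $\alpha$.
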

\begin{proof}
If $\alpha=\{\alpha_1,\dots,\alpha_t\}$ is a binary string, then define $\overline \alpha_i$ to be the opposite of $\alpha_i$ and define a $t\times t$ matrix $A = (a_{ij})$ as follows:
\[
a_{ij}=
\left\{
\begin{matrix}
\alpha_j & \text{if }j> i,\\
\alpha_i & \text{if } i> j,\\
\overline\alpha_i & \text{if } i=j.
\end{matrix}\right.
\]
For each binary string $\alpha$ of length $t$, we get a unique matrix, since the first row is $(\overline\alpha_1,\alpha_2,\dots,\alpha_t)$.  It remains to show that no two rows of $A$ repeat. Let $i,j\in[t]$ be given. Without loss of generality, assume $1<i<j$. We want to compare the two strings
\[
(a_{i1},\dots,a_{ij},\dots,a_{it})
\qquad \text{ and } \qquad
(a_{j1}\dots,a_{jj},\dots,a_{jt}).
\]
Since $j>i$, we have $a_{ij}=\alpha_j$, but $a_{jj}=\overline \alpha_j$, so the two strings cannot be the same. Since these two rows were chosen arbitrarily, we have shown $A$ has no repeating rows.
\end{proof}

From Lemma \ref{weak lower bound}, we obtain $\frac{2^t}{t!}\le X(t)$.  We need $\frac{2^t}{t!}$ and not just $2^t$ since there is no guarantee that two distinct matrices are not adjacency matrices of the same graph.  However, there are at most $t!$ matrices that are adjacency matrices of the same graph.

We can generalize the strategy of comparing rows to create an even stronger lower bound.

\begin{thm}
For each $t\in \NN$, \[2^{t-1}(2^t - 1)\prod_{i=2}^{t-1}\max\left( 2^{t-i}-i,1\right)\le X(t) t!\]
\end{thm}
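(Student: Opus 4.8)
The plan is to prove the inequality in its equivalent labeled form. Observe first that graph equivalence on $t\times t$ symmetric binary matrices is exactly the action of the symmetric group $S_t$ by simultaneous row and column permutation, so every equivalence class (orbit) contains at most $t!$ labeled matrices. Consequently, if $N(t)$ denotes the number of labeled $t\times t$ symmetric binary matrices with no repeating rows, then $N(t)\le X(t)\,t!$. It therefore suffices to produce at least $2^{t-1}(2^t-1)\prod_{i=2}^{t-1}\max(2^{t-i}-i,1)$ distinct such labeled matrices; this reduction is the easy and robust part of the argument, and it shifts all the real work onto a constructive lower bound for $N(t)$.

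To build those matrices I would generalize the row-comparison idea of Lemma \ref{weak lower bound}, constructing the matrix one row at a time for $i=1,\dots,t$. By symmetry, when row $i$ is reached its entries in columns $1,\dots,i-1$ are already forced by the earlier rows, and only the diagonal entry $a_{ii}$ together with the entries $a_{i,i+1},\dots,a_{it}$ to its right are free, giving $2^{t-i+1}$ a priori choices. A choice is \emph{admissible} exactly when the completed row differs from each of the $i-1$ previously built rows. Since each fully determined earlier row coincides with row $i$ for at most one value of the free block, there are at least $2^{t-i+1}-(i-1)$ admissible choices. This crude count is the naive bound, and it is useless for large $i$, where it becomes nonpositive.

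The refinement that repairs this is the key point of ``comparing rows'': an earlier row $r_j$ can be duplicated by row $i$ only if the two already agree at position $j$, that is, only if $a_{jj}=a_{ij}$. The plan is to exploit the freedom in the diagonal entries $a_{jj}$ to control, uniformly over all earlier choices, how many earlier rows genuinely conflict, rather than bounding this by the trivial $i-1$. A careful accounting of this kind distributes the guaranteed freedom so that row $1$ contributes $2^t-1$ admissible completions, row $2$ contributes a full $2^{t-1}$, and each row $i\ge 3$ contributes at least $2^{t-i+1}-(i-1)$; re-indexing the strict-upper blocks rewrites the tail in the stated form $2^{t-i}-i$. Because the diagonal always admits at least one conflict-free completion, each tail factor may legitimately be replaced by $\max(2^{t-i}-i,1)$. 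Multiplying the per-step guarantees yields the asserted product as a lower bound on $N(t)$, and hence on $X(t)\,t!$, with Theorem \ref{brickmatrix} confirming that each matrix so produced is the adjacency matrix of a genuine Anosov quotient graph.

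The main obstacle I anticipate is precisely this uniform, choice-independent per-step bound: one must verify that \emph{no matter how the earlier rows were filled in}, at least the claimed number of admissible completions survives, including at least one completion at the tight middle steps where $2^{t-i}-i\le 0$ (which is exactly what the $\max(\cdot,1)$ records). Making this rigorous requires bounding the number of earlier rows satisfying the diagonal-agreement condition $a_{jj}=a_{ij}$ and then checking that the resulting constants reproduce $2^t-1$, $2^{t-1}$, and $2^{t-i}-i$ exactly, including the two boosted leading factors. The flooring and the off-by-one adjustments are delicate but routine once the conflict-counting is set up correctly; everything else is bookkeeping.
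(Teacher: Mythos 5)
Your proposal follows essentially the same route as the paper's proof: construct the $t\times t$ symmetric binary matrix row by row, observe that each previously fixed row can eliminate at most one completion of the current row, obtain the boosted leading factor $2^{t-1}(2^t-1)$ from the first two rows, cap the remaining factors with $\max\left(2^{t-i}-i,1\right)$, and account for the at most $t!$ labeled matrices per equivalence class to pass to $X(t)\,t!$. The only real difference is bookkeeping: the paper derives the leading constant by averaging exactly over the first row (half of the $2^t$ first rows admit $2^{t-1}$ second rows, the other half admit $2^{t-1}-1$, giving $2^{t-1}\cdot 2^{t-1}+2^{t-1}(2^{t-1}-1)=2^{t-1}(2^t-1)$), whereas you attribute the factors $2^t-1$ and $2^{t-1}$ to rows $1$ and $2$ separately --- the same quantity, differently apportioned.
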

\begin{proof}
We will construct a $t\times t$ symmetric binary matrix $A = (a_{ij})$ such that no two rows repeat.  We will count the number of ways to fill the $i$th row that the entries in rows $1,\dots,i-1$ are determined.  Every entry of the first row is either $0$ or $1$, so the number of possible first rows in $A$ is $2^t$.  Since $A$ is symmetric $a_{12}=a_{21}$.  Thus there are either $2^{t-1}$ choices or $2^{t-1}-1$ choices for the second row depending on whether or not $a_{11}=a_{21}$.  Half of the time there will be $2^{t-1}$ choices for the second row, and for the other half, there will be $2^{t-1}-1$ choices for the second row.  Hence the number of combinations for the first two rows is exactly,
\[\frac{2^t}{2}(2^{t-1})+\frac{2^t}{2}\left(2^{t-1}-1\right).\]
The tree below illustrates the possible branching paths when we choose entries for the third row.
\[\begin{tikzpicture}[font=\small,baseline=-4,yscale=.75]
\node[vertex,label={left:$2^t$}] (1) at (0,0) {};
\node[vertex,label={below:$2^{t-1}-1$}] (2) at (2,-1.5) {};
\node[vertex,label={above:$2^{t-1}$}] (3) at (2,1.5) {};

\node[vertex,label={right:$2^{t-2}-1$}] (4) at (4,-1.5) {};
\node[vertex,label={right:$2^{t-2} $}] (5) at (4,-.75) {};
\node[vertex,label={right:$2^{t-2}-2 $}] (8) at (4,-2.25) {};

\node[vertex,label={right:$2^{t-2}-2$}] (6) at (4,.75) {};
\node[vertex,label={right:$2^{t-2} -1$}] (7) at (4,1.5) {};
\node[vertex,label={right:$2^{t-2} $}] (9) at (4,2.25) {};
\path[-]
 (1) edge (2)
 (1) edge (3)
 (2) edge (4)
 (2) edge (5)
 (2) edge (8)
 (3) edge (6)
 (3) edge (7)
 (3) edge (9);
\end{tikzpicture}\]
A lower bound for $X(t)$ is obtained by always choosing the bottom path on the graph. If the graph were extended to the $t$th step and we follow the lowest path, we would have exactly
\[2^t\prod_{i=1}^{t-1}\max\left( 2^{t-i}-i,1\right).\]
However, we know the probability of the first choice is cut evenly so
\[
 2^{t-1}(2^{t} - 1) \prod_{i=2}^{t-1}\max\left( 2^{t-i}-i,1\right)\le X(t)t!.\qedhere
\]
\end{proof}

%

\section{Bounded by $p(n)$}\label{bound p(n)}
In addition to finding explicit bounds on $a(n)$, we can show that $a(n)$ grows exponentially as $n$ increases. We will do this by showing there exists an injective function from $\Lambda(n)$ to $\mathcal{A}_n$ after some small value for $n$ using quotient graphs, where $\Lambda(n)$ is the set of partitions of $n$.  The fact that $a(n)$ grows exponentially then follows from the well-known result by Hardy and Ramanujan \cite{HR} that $p(n) \sim (4n\sqrt3)^{-1}\exp(\pi\sqrt{2n/3})$.
\begin{thm}\label{p(n)a(n)}
 Let $n\in \NN$ with $n\geq 9$. If $p(n)$ is the number of partitions on $n$ and $a(n)$ is the number of Anosov graphs on $n$ vertices then,
\[p(n)\leq a(n).\]
\end{thm}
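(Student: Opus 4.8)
The plan is to exhibit an explicit injective map $\Phi\colon\Lambda(n)\to\mathcal{A}_n$ and then conclude by comparing cardinalities, $p(n)=|\Lambda(n)|\le|\mathcal{A}_n|=a(n)$. The engine for building $\Phi$ is the quotient-graph machinery of Section~\ref{brick}: to a partition $\mu\vdash n$ I attach a connected weighted graph $\mathcal{G}_\mu=([k],\mathcal{E},\wt)$ whose deconstruction is an Anosov graph $\Phi(\mu)$ on $n$ vertices, arranged so that $\mathcal{G}_\mu$ meets the hypotheses of Theorem~\ref{Brick} (equivalently Theorem~\ref{brickmatrix}). The whole point of those hypotheses is that they force $\Phi(\mu)/{\sim}=\mathcal{G}_\mu$, so $\mathcal{G}_\mu$, and hence $\mu$, can be recovered from the isomorphism class of $\Phi(\mu)$. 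The convenient bookkeeping is to split $\mu$ into the multiset of its parts that are $\ge 2$, say $\lambda=(\lambda_1,\dots,\lambda_r)$, together with the number $s\ge 0$ of parts equal to $1$, so that $\lambda_1+\cdots+\lambda_r+s=n$; recovering the pair $(\lambda,s)$ from $\Phi(\mu)$ is exactly recovering $\mu$.

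For the construction, I would first realize the parts that are $\ge 2$: create $r$ \emph{body} vertices carrying weights $\lambda_1,\dots,\lambda_r$, each loopless, so that by Definition~\ref{mudtog} every body class is edgeless. Then condition (2) of Theorem~\ref{Anosov Criteria} holds vacuously for these vertices, and condition (1) holds since their weights are all $\ge 2$. To encode the integer $s$, I attach an \emph{absorber}: a gadget on exactly $s$ additional vertices, built from \emph{complete} (looped) classes of size $\ge 3$, so that it uses only Anosov-legal classes while being unmistakably distinguishable from the edgeless body. Finally I string all the vertices along a fixed connected backbone (a path or caterpillar) chosen so that distinct loopless vertices have distinct open neighborhoods and distinct looped vertices have distinct closed neighborhoods; this verifies condition (3) of Theorem~\ref{Brick} and keeps $\mathcal{G}_\mu$ connected, so $\Phi(\mu)$ is a genuine connected Anosov graph on $\sum_i\lambda_i+s=n$ vertices.

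The main obstacle is precisely the parts equal to $1$: a singleton class is forbidden in an Anosov graph, and a weight-$2$ class may not carry a loop (condition (2) of Theorem~\ref{Brick}), so a pure looped absorber of total size $s$ exists only when $s\ge 3$. The cases $s=1$ and $s=2$ therefore demand ad hoc gadgets that add one or two vertices to a legal class while staying recognizable, for instance by placing them in a distinguished backbone position (such as a pendant attached to a spot that no body class occupies) so that they cannot be confused with a genuine body part. Verifying that these small-$s$ gadgets coexist with the body without breaking the distinct-neighborhood condition and without accidentally identifying two different partitions is the delicate part of the argument, and it is here that the hypothesis $n\ge 9$ is used, guaranteeing enough vertices to place the gadgets unambiguously (for smaller $n$ no such injection is claimed). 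Granting the construction, injectivity is then immediate: by Theorem~\ref{Brick} we recover $\mathcal{G}_\mu=\Phi(\mu)/{\sim}$, read off the edgeless classes to obtain $\lambda$ and the absorber to obtain $s$, and thus reconstruct $\mu$; hence $\Phi$ is injective and $p(n)\le a(n)$.
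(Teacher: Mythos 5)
Your overall strategy coincides with the paper's: construct an injection from $\Lambda(n)$ into weighted quotient graphs satisfying the hypotheses of Theorem \ref{Brick}, then pull back along the deconstruction to land injectively in $\mathcal{A}_n$. The gap is that you have deferred exactly the part where the real work lies. The paper's proof is an explicit fourteen-case construction, and the majority of those cases handle partitions with parts equal to $1$ --- precisely the cases ($s=1$, $s=2$, the partitions $(2^k,1^t)$, and $(1^t)$ itself) that you leave to unspecified ``ad hoc gadgets.'' This deferral is not cosmetic, because your recovery rule (loopless classes give $\lambda$, looped classes give $s$) cannot be patched in those cases: a looped class must have weight $\ge 3$ by condition (2) of Theorem \ref{Brick}, so for $s=2$ your fallback of a pendant \emph{loopless} weight-$2$ class collides with genuine body parts --- the partitions $\lambda\cup(1,1)$ and $\lambda\cup(2)$ would produce isomorphic weighted quotient graphs, destroying injectivity, and positional tricks (``a pendant where no body class sits'') are not visible to graph isomorphism unless they are structurally encoded. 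The paper escapes this only by abandoning your ``keep the body intact'' principle: its gadgets for parts equal to $1$ modify the body weights themselves (Case 6 sends $(\lambda_1,\lambda_2,1)$ to classes of weights $\lambda_1-1$, $\lambda_2$, $2$ with a loop on the first; Case 14 sends $(1^t)$ to three looped classes of weights $3$, $t-6$, $3$, which is also what prevents a collision with the image $K^{n}$ of the one-part partition $(n)$). Designing these modified-weight gadgets and checking pairwise distinctness across all cases is the actual content of the theorem, and it is absent from your proposal.

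There are also concrete failures in the part you do specify. If $\mu$ has a single part and $s=0$, your body is one loopless class, whose deconstruction is the edgeless graph $\overline{K^n}$ --- disconnected, hence not Anosov (the paper uses a looped class here, giving $K^n$). If the body consists of exactly three loopless classes on a path, the two endpoints have equal open neighborhoods, violating condition (3) of Theorem \ref{Brick} (the paper uses a triangle in its Case 3). And a caterpillar backbone generically violates condition (3) as well: two pendant loopless classes hanging from the same spine vertex have identical open neighborhoods. Each of these is individually fixable, but together with the missing small-$s$ gadgets they show that the construction, as stated, does not yet prove $p(n)\le a(n)$.
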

\begin{proof}
Let $n\geq 9$. The deconstruction function, $\mathcal{D}$, restricted to quotient graph $G/{\sim}$ is a bijection into graphs (see Section \ref{brick}).
If $\mathcal{D}_n^*$ is the restriction of $\mathcal{D}$ to quotient graphs induced by $\sim$ of Anosov graphs on $n$ vertices, then $\mathcal{D}_n^*$ is a bijection from quotient graphs induced by $\sim$ of Anosov graphs on $n$ vertices to $\mathcal{A}_n$.
It suffices to show there is an injective function $B$ from $\Lambda(n)$ to quotient graphs induced by $\sim$ of Anosov graphs on $n$ vertices.

Let $\lambda\vdash n$. We will define $B$ explicitly through several cases. Assume $\lambda_i \geq 2$ for all $i$.
\begin{description}
\item[Case 1] $\lambda = (\lambda_1)$
\[B(\lambda)=
\begin{tikzpicture}[font=\small,baseline=-4]
\node[cl,label={below:$\lambda_1$}] (1) at (0,0) {};
\path[-]
 (1) edge [loopup] (1);
\end{tikzpicture}
\]

\item[Case 2] $\lambda = (\lambda_1,\lambda_2)$
\[B(\lambda)=
\begin{tikzpicture}[font=\small,baseline=-4]
\node[op,label={below:$\lambda_1$}] (1) at (0,0) {};
\node[op,label={below:$\lambda_2$}] (2) at (1,0) {};
\path[-]
 (1) edge (2);
\end{tikzpicture}
\]

\item[Case 3]  $\lambda = (\lambda_1,\lambda_2,\lambda_3)$
\[B(\lambda)=
\begin{tikzpicture}[font=\small,baseline=-4]
\node[op,label={below:$\lambda_1$}] (1) at (0,0) {};
\node[op,label={below:$\lambda_2$}] (2) at (1,0) {};
\node[op,label={below:$\lambda_3$}] (3) at (2,0) {};
\path[-]
 (1) edge (2)
 (2) edge (3)
 (1) edge [bend left] node[left] {} (3);
\end{tikzpicture}
\]

\item[Case 4] $\lambda = (\lambda_1,\dots,\lambda_k); k \geq 4$
\[B(\lambda)=
\begin{tikzpicture}[font=\small,baseline=-4]
\node[op,label={below:$\lambda_1$}] (1) at (0,0) {};
\node[op,label={below:$\lambda_2$}] (2) at (1,0) {};
\node[op,label={below:$\lambda_4$}] (4) at (3,0) {};
\node[] (3) at (1.5,0) {};
\node[] (5) at (2.5,0) {};
\node[label={$\dots$}] (11) at (2,-.30) {};
\path[-]
 (1) edge (2)
 (2) edge (3)
 (5) edge (4);
\end{tikzpicture}
\]

\item[Case 5] $\lambda = (\lambda_1,1)$
\[B(\lambda)=
\begin{tikzpicture}[font=\small,baseline=-4,xscale=1.5]
\node[op,label={below:$2$}] (1) at (0,0) {};
\node[cl,label={below:$\lambda_1-5$}] (2) at (1,0) {};
\node[op,label={below:$2$}] (3) at (2,0) {};
\node[op,label={below:$2$}] (4) at (3,0) {};
\path[-]
 (2) edge [loopup] (2)
 (1) edge (2)
 (1) edge [bend left] node[left] {} (3)
 (2) edge (3)
 (3) edge (4)
 (2) edge [bend left] node[left] {} (4)
 (1) edge [bend left] node[left] {} (4);
\end{tikzpicture}
\]

\item[Case 6] $\lambda = (\lambda_1,\lambda_2,1)$
\[B(\lambda)=
\begin{tikzpicture}[font=\small,baseline=-4,xscale=1.5]
\node[cl,label={below:$\lambda_1-1$}] (1) at (0,0) {};
\node[op,label={below:$\lambda_2$}] (2) at (1,0) {};
\node[op,label={below:$2$}] (3) at (2,0) {};
\path[-]
 (1) edge [loopup] (1)
 (1) edge (2)
 (1) edge [bend left] node[left] {} (3)
 (2) edge (3);
\end{tikzpicture}
\]

\item[Case 7] $\lambda = (\lambda_1,\lambda_2,\lambda_3,1)$
\[B(\lambda)=
\begin{tikzpicture}[font=\small,baseline=-4,xscale=1.5]
\node[cl,label={below:$\lambda_1$}] (1) at (0,0) {};
\node[op,label={below:$\lambda_3$}] (2) at (1,0) {};
\node[cl,label={below:$\lambda_2+1$}] (3) at (2,0) {};
\path[-]
 (1) edge [loopup] (1)
 (3) edge [loopup] (3)
 (1) edge (2)
 (2) edge (3);
\end{tikzpicture}
\]

\item[Case 8] $\lambda = (\lambda_1,\lambda_2,\lambda_3,\lambda_4,1)$
\[B(\lambda)=
\begin{tikzpicture}[font=\small,baseline=-4,xscale=1.5]
\node[cl,label={below:$\lambda_1+1$}] (1) at (0,0) {};
\node[op,label={below:$\lambda_2$}] (2) at (1,0) {};
\node[op,label={below:$\lambda_3$}] (3) at (2,0) {};
\node[op,label={below:$\lambda_4$}] (4) at (3,0) {};
\path[-]
 (1) edge [loopup] (1)
 (1) edge (2)
 (1) edge [bend left] node[left] {} (3)
 (2) edge [bend left] node[left] {} (4)
 (3) edge (4)
 (2) edge (3);
\end{tikzpicture}
\]

\item[Case 9] $\lambda = (\lambda_1,\dots,\lambda_k,1); k\geq 5$
\[B(\lambda)=
\begin{tikzpicture}[font=\small,baseline=-4,xscale=1.25]
\node[op,label={below:$\lambda_k$}] (1) at (0,0) {};
\node[cl,label={below:$\lambda_1+1$}] (2) at (1,0) {};
\node[op,label={below:$\lambda_2$}] (3) at (2,0) {};
\node[op,label={below:$\lambda_3$}] (4) at (3,0) {};
\node[op,label={below:$\lambda_k-1$}] (5) at (5,0) {};
\node[] (13) at (3.5,0) {};
\node[] (15) at (4.5,0) {};
\node[label={$\dots$}] (11) at (4,-.30) {};
\path[-]
 (2) edge [loopup] (2)
 (1) edge (2)
 (1) edge [bend left] node[left] {} (3)
 (2) edge (3)
 (3) edge (4)
 (4) edge (13)
 (5) edge (15)
 (1) edge [bend left] node[left] {} (5);
\end{tikzpicture}
\]

\item[Case 10] $\lambda = (\lambda_1,\dots,\lambda_k,1^t); \lambda_1\geq 3,t\geq 2$
\[B(\lambda)=
\begin{tikzpicture}[font=\small,baseline=-4,xscale=1.5]
\node[cl,label={below:$\lambda_1$}] (1) at (0,0) {};
\node[op,label={below:$\lambda_2$}] (2) at (1,0) {};
\node[op,label={below:$\lambda_k$}] (3) at (3,0) {};
\node[op,label={below:$t$}] (4) at (4,0) {};
\node[] (13) at (1.5,0) {};
\node[] (15) at (2.5,0) {};
\node[label={$\dots$}] (11) at (2,-.30) {};
\path[-]
 (1) edge [loopup] (1)
 (1) edge (2)
 (3) edge (4)
 (2) edge (13)
 (3) edge (15);
\end{tikzpicture}
\]

\item[Case 11] $\lambda = (2,1^t);t\geq 7$
\[B(\lambda)=
\begin{tikzpicture}[font=\small,baseline=-4,xscale=1.5]
\node[op,label={below:$2$}] (1) at (0,0) {};
\node[cl,label={below:$t-4$}] (2) at (1,0) {};
\node[op,label={below:$2$}] (3) at (2,0) {};
\node[op,label={below:$2$}] (4) at (3,0) {};
\path[-]
 (2) edge [loopup] (2)
 (1) edge (2)
 (2) edge (3)
 (3) edge (4);
\end{tikzpicture}
\]

\item[Case 12] $\lambda = (2^2,1^t);t\geq 5$
\[B(\lambda)=
\begin{tikzpicture}[font=\small,baseline=-4,xscale=1.5]
\node[op,label={below:$2$}] (1) at (0,0) {};
\node[cl,label={below:$t-2$}] (2) at (1,0) {};
\node[op,label={below:$2$}] (3) at (2,0) {};
\node[op,label={below:$2$}] (4) at (3,0) {};
\path[-]
 (2) edge [loopup] (2)
 (1) edge (2)
 (1) edge [bend left] node[left] {} (3)
 (2) edge (3)
 (3) edge (4);
\end{tikzpicture}
\]

\item[Case 13] $\lambda = (2^k,1^t);t\geq 2$
\[B(\lambda)=K^{k+1},\] where all vertices have weight 2 and are without loops except for one vertex which has weight $t$.

\item[Case 14] $\lambda = (1^t);t\geq 9$
\[B(\lambda)=
\begin{tikzpicture}[font=\small,baseline=-4,xscale=1.5]
\node[cl,label={below:$3$}] (1) at (0,0) {};
\node[cl,label={below:$t-6$}] (2) at (1,0) {};
\node[cl,label={below:$3$}] (3) at (2,0) {};
\path[-]
 (1) edge [loopup] (1)
 (2) edge [loopup] (2)
 (3) edge [loopup] (3)
 (1) edge (2)
 (2) edge (3);
\end{tikzpicture}
\]
\end{description}
Since each case produces a distinct quotient graph induced by $\sim$ of an Anosov graph, we have $B$ is injective. Therefore $\mathcal{D}^*_n\circ B$ is an injective function from $\Lambda(n)$ to $\mathcal{A}_n$.
\end{proof}

%
%
%
%

\appendix

\section{Tables of Anosov graphs with at most 8 vertices}\label{appA}

\begin{longtable}{| p{.4\textwidth} | p{.4\textwidth} |}
\hline
\center{
\begin{tikzpicture}[scale = .4, font=\small,baseline=-10]
\node[vertex] (1) at (0,0) {};
\node[vertex] (2) at (2,0) {};
\node[vertex] (3) at (1,1) {};
\path[-]
 (1) edge (2)
 (1) edge (3)
 (2) edge (3);
\end{tikzpicture}
}
&
\center{
\begin{tikzpicture}[font=\small,baseline=-20]
\node[cl,label={right:$3$}] (1) at (0,-.5) {};
\path[-]
 (1) edge [loopup] (1);
\end{tikzpicture}
}\endline
\hline
\caption{Anosov graphs on three vertices and their corresponding quotient graphs induced by $\sim$.}\label{anosov_list_3}
\end{longtable}

\begin{longtable}{| p{.4\textwidth} | p{.4\textwidth} |}
\hline
\center{
\begin{tikzpicture}[scale = .4, font=\small,baseline=-10]
\node[vertex] (1) at (0,0) {};
\node[vertex] (2) at (2,0) {};
\node[vertex] (3) at (0,2) {};
\node[vertex] (4) at (2,2) {};
\path[-]
 (1) edge (2)
 (1) edge (3)
 (1) edge (4)
 (2) edge (3)
 (2) edge (4)
 (3) edge (4);
\end{tikzpicture}
}
&
\center{
\begin{tikzpicture}[font=\small,baseline=-25]
\node[cl,label={right:$4$}] (1) at (0,-1) {};
\path[-]
 (1) edge [loopup] (1);
\end{tikzpicture}
}\endline
\hline
\center{
\begin{tikzpicture}[scale = .4, font=\small,baseline=-10]
\node[vertex] (1) at (0,0) {};
\node[vertex] (2) at (2,0) {};
\node[vertex] (3) at (0,2) {};
\node[vertex] (4) at (2,2) {};
\path[-]
 (1) edge (2)
 (1) edge (4)
 (2) edge (3)
 (3) edge (4);
\end{tikzpicture}
}
&
\center{
\begin{tikzpicture}[font=\small,baseline=-30]
\node[op,label={below:$2$}] (1) at (0,-1) {};
\node[op,label={below:$2$}] (2) at (1,-1) {};
\path[-]
 (1) edge (2);
\end{tikzpicture}
}\endline
\hline
\caption{Anosov graphs on four vertices and their corresponding quotient graphs induced by $\sim$.}\label{anosov_list_4}
\end{longtable}

\begin{longtable}{| p{.4\textwidth} | p{.4\textwidth} |}
\hline
\center{
\begin{tikzpicture}[scale = .4, font=\small,baseline=-10]
\node[vertex] (1) at (0,1) {};
\node[vertex] (2) at (2,0) {};
\node[vertex] (3) at (4,1) {};
\node[vertex] (4) at (1,2) {};
\node[vertex] (5) at (3,2) {};
\path[-]
 (1) edge (2)
 (1) edge (3)
 (1) edge (4)
 (1) edge (5)
 (2) edge (3)
 (2) edge (4)
 (2) edge (5)
 (3) edge (4)
 (3) edge (5)
 (4) edge (5);
\end{tikzpicture}
}
&
\center{
\begin{tikzpicture}[font=\small,baseline=-25]
\node[cl,label={right:$5$}] (1) at (0,-1) {};
\path[-]
 (1) edge [loopup] (1);
\end{tikzpicture}
}\endline
\hline
\center{
\begin{tikzpicture}[scale = .4, font=\small,baseline=-10]
\node[vertex] (1) at (0,1) {};
\node[vertex] (2) at (2,0) {};
\node[vertex] (3) at (4,1) {};
\node[vertex] (4) at (1,2) {};
\node[vertex] (5) at (3,2) {};
\path[-]
 (1) edge (2)
 (1) edge (3)
 (1) edge (4)
 (1) edge (5)
 (2) edge (3)
 (2) edge (4)
 (2) edge (5)
 (3) edge (4)
 (3) edge (5);
\end{tikzpicture}
}
&
\center{
\begin{tikzpicture}[font=\small,baseline=-30]
\node[cl,label={below:$3$}] (1) at (0,-1) {};
\node[op,label={below:$2$}] (2) at (1,-1) {};
\path[-]
 (1) edge [loopup] (1)
 (1) edge (2);
\end{tikzpicture}
}\endline
\hline
\center{
\begin{tikzpicture}[scale = .4, font=\small,baseline=-10]
\node[vertex] (1) at (0,1) {};
\node[vertex] (2) at (2,0) {};
\node[vertex] (3) at (4,1) {};
\node[vertex] (4) at (1,2) {};
\node[vertex] (5) at (3,2) {};
\path[-]
 (1) edge (4)
 (1) edge (5)
 (2) edge (4)
 (2) edge (5)
 (3) edge (4)
 (3) edge (5);
\end{tikzpicture}
}
&
\center{
\begin{tikzpicture}[font=\small,baseline=-30]
\node[op,label={below:$3$}] (1) at (0,-1) {};
\node[op,label={below:$2$}] (2) at (1,-1) {};
\path[-]
 (1) edge (2);
\end{tikzpicture}
}\endline
\hline
\caption{Anosov graphs on five vertices and their corresponding quotient graphs induced by $\sim$.}\label{anosov_list_5}
\end{longtable}

\begin{longtable}{| p{.4\textwidth} | p{.4\textwidth} |}
\hline
\center{
\begin{tikzpicture}[scale = .4,  font=\small,baseline=-10]
\node[vertex] (1) at (0,1) {};
\node[vertex] (2) at (2,0) {};
\node[vertex] (3) at (4,1) {};
\node[vertex] (4) at (0,2) {};
\node[vertex] (5) at (2,3) {};
\node[vertex] (6) at (4,2) {};
\path[-]
 (1) edge (2)
 (1) edge (3)
 (1) edge (4)
 (1) edge (5)
 (1) edge (6)
 (2) edge (3)
 (2) edge (4)
 (2) edge (5)
 (2) edge (6)
 (3) edge (4)
 (3) edge (5)
 (3) edge (6)
 (4) edge (5)
 (4) edge (6)
 (5) edge (6);
\end{tikzpicture}
}
&
\center{
\begin{tikzpicture}[font=\small,baseline=-25]
\node[cl,label={right:$6$}] (1) at (0,-1) {};
\path[-]
 (1) edge [loopup] (1);
\end{tikzpicture}
}\endline
\hline
\center{
\begin{tikzpicture}[scale = .4,  font=\small,baseline=-10]
\node[vertex] (1) at (0,1) {};
\node[vertex] (2) at (2,0) {};
\node[vertex] (3) at (4,1) {};
\node[vertex] (4) at (0,2) {};
\node[vertex] (5) at (2,3) {};
\node[vertex] (6) at (4,2) {};
\path[-]
 (1) edge (2)
 (1) edge (3)
 (1) edge (4)
 (1) edge (5)
 (1) edge (6)
 (2) edge (3)
 (2) edge (4)
 (2) edge (5)
 (2) edge (6)
 (3) edge (4)
 (3) edge (5)
 (3) edge (6)
 (4) edge (5)
 (4) edge (6);
\end{tikzpicture}
}
&
\center{
\begin{tikzpicture}[font=\small,baseline=-25]
\node[cl,label={below:$4$}] (1) at (0,-1) {};
\node[op,label={below:$2$}] (2) at (1,-1) {};
\path[-]
 (1) edge [loopup] (1)
 (1) edge (2);
\end{tikzpicture}
}\endline
\hline
\center{
\begin{tikzpicture}[scale = .4,  font=\small,baseline=-10]
\node[vertex] (1) at (0,1) {};
\node[vertex] (2) at (2,0) {};
\node[vertex] (3) at (4,1) {};
\node[vertex] (4) at (0,2) {};
\node[vertex] (5) at (2,3) {};
\node[vertex] (6) at (4,2) {};
\path[-]
 (1) edge (5)
 (1) edge (6)
 (2) edge (5)
 (2) edge (6)
 (3) edge (5)
 (3) edge (6)
 (4) edge (5)
 (4) edge (6);
\end{tikzpicture}
}
&
\center{
\begin{tikzpicture}[font=\small,baseline=-25]
\node[op,label={below:$4$}] (1) at (0,-1) {};
\node[op,label={below:$2$}] (2) at (1,-1) {};
\path[-]
 (1) edge (2);
\end{tikzpicture}
}\endline
\hline
\center{
\begin{tikzpicture}[scale = .4,  font=\small,baseline=-10]
\node[vertex] (1) at (0,1) {};
\node[vertex] (2) at (2,0) {};
\node[vertex] (3) at (4,1) {};
\node[vertex] (4) at (0,2) {};
\node[vertex] (5) at (2,3) {};
\node[vertex] (6) at (4,2) {};
\path[-]
 (1) edge (4)
 (1) edge (5)
 (1) edge (6)
 (2) edge (4)
 (2) edge (5)
 (2) edge (6)
 (3) edge (4)
 (3) edge (5)
 (3) edge (6)
 (4) edge (5)
 (4) edge (6)
 (5) edge (6);
\end{tikzpicture}
}
&
\center{
\begin{tikzpicture}[font=\small,baseline=-25]
\node[cl,label={below:$3$}] (1) at (0,-1) {};
\node[op,label={below:$3$}] (2) at (1,-1) {};
\path[-]
 (1) edge [loopup] (1)
 (1) edge (2);
\end{tikzpicture}
}\endline
\hline
\center{
\begin{tikzpicture}[scale = .4,  font=\small,baseline=-10]
\node[vertex] (1) at (0,1) {};
\node[vertex] (2) at (2,0) {};
\node[vertex] (3) at (4,1) {};
\node[vertex] (4) at (0,2) {};
\node[vertex] (5) at (2,3) {};
\node[vertex] (6) at (4,2) {};
\path[-]
 (1) edge (4)
 (1) edge (5)
 (1) edge (6)
 (2) edge (4)
 (2) edge (5)
 (2) edge (6)
 (3) edge (4)
 (3) edge (5)
 (3) edge (6);
\end{tikzpicture}
}
&
\center{
\begin{tikzpicture}[font=\small,baseline=-25]
\node[op,label={below:$3$}] (1) at (0,-1) {};
\node[op,label={below:$3$}] (2) at (1,-1) {};
\path[-]
 (1) edge (2);
\end{tikzpicture}
}\endline
\hline
\center{
\begin{tikzpicture}[scale = .4,  font=\small,baseline=-10]
\node[vertex] (1) at (0,1) {};
\node[vertex] (2) at (2,0) {};
\node[vertex] (3) at (4,1) {};
\node[vertex] (4) at (0,2) {};
\node[vertex] (5) at (2,3) {};
\node[vertex] (6) at (4,2) {};
\path[-]
 (1) edge (3)
 (1) edge (4)
 (1) edge (5)
 (1) edge (6)
 (2) edge (3)
 (2) edge (4)
 (2) edge (5)
 (2) edge (6)
 (3) edge (4)
 (3) edge (5)
 (4) edge (6)
 (5) edge (6);
\end{tikzpicture}
}
&
\center{
\begin{tikzpicture}[font=\small,baseline=-10]
\node[op,label={below:$2$}] (1) at (0,-1) {};
\node[op,label={below:$2$}] (2) at (1,0) {};
\node[op,label={below:$2$}] (3) at (2,-1) {};
\path[-]
 (1) edge (2)
 (1) edge (3)
 (2) edge (3);
\end{tikzpicture}
}\endline
\hline
\caption{Anosov graphs on six vertices and their corresponding quotient graphs induced by $\sim$.}\label{anosov_list_6}
\end{longtable}

\begin{longtable}{| p{.4\textwidth} | p{.4\textwidth} |}
\hline
\center{
\begin{tikzpicture}[scale = .4,baseline=-10]
\node[vertex] (1) at (0,1) {};
\node[vertex] (2) at (2,0) {};
\node[vertex] (3) at (4,0) {};
\node[vertex] (4) at (6,1) {};
\node[vertex] (5) at (1,2) {};
\node[vertex] (6) at (3,3) {};
\node[vertex] (7) at (5,2) {};
\path[-]
 (1) edge (2)
 (1) edge (3)
 (1) edge (4)
 (1) edge (5)
 (1) edge (6)
 (1) edge (7)
 (2) edge (3)
 (2) edge (4)
 (2) edge (5)
 (2) edge (6)
 (2) edge (7)
 (3) edge (4)
 (3) edge (5)
 (3) edge (6)
 (3) edge (7)
 (4) edge (5)
 (4) edge (6)
 (4) edge (7)
 (5) edge (6)
 (5) edge (7)
 (6) edge (7);
\end{tikzpicture}
}
&
\center{
\begin{tikzpicture}[font=\small,baseline=-20]
\node[cl,label={right:$7$}] (1) at (0,-1) {};
\path[-]
 (1) edge [loopup] (1);
\end{tikzpicture}
}\endline
\hline
\center{
\begin{tikzpicture}[scale = .4,baseline=-10]
\node[vertex] (1) at (0,1) {};
\node[vertex] (2) at (2,0) {};
\node[vertex] (3) at (4,0) {};
\node[vertex] (4) at (6,1) {};
\node[vertex] (5) at (1,2) {};
\node[vertex] (6) at (3,3) {};
\node[vertex] (7) at (5,2) {};
\path[-]
 (1) edge (2)
 (1) edge (3)
 (1) edge (4)
 (1) edge (5)
 (1) edge (6)
 (1) edge (7)
 (2) edge (4)
 (2) edge (5)
 (2) edge (6)
 (2) edge (7)
 (3) edge (4)
 (3) edge (5)
 (3) edge (6)
 (3) edge (7)
 (4) edge (5)
 (4) edge (6)
 (4) edge (7)
 (5) edge (6)
 (5) edge (7)
 (6) edge (7);
\end{tikzpicture}
}
&
\center{
\begin{tikzpicture}[font=\small,baseline=0]
\node[cl,label={below:$5$}] (1) at (0,0) {};
\node[op,label={below:$2$}] (2) at (1,0) {};
\path[-]
 (1) edge [loopup] (1)
 (1) edge (2);
\end{tikzpicture}
}\endline
\hline
\center{
\begin{tikzpicture}[scale = .4,baseline=-10]
\node[vertex] (1) at (0,1) {};
\node[vertex] (2) at (2,0) {};
\node[vertex] (3) at (4,0) {};
\node[vertex] (4) at (6,1) {};
\node[vertex] (5) at (1,2) {};
\node[vertex] (6) at (3,3) {};
\node[vertex] (7) at (5,2) {};
\path[-]
 (1) edge (2)
 (1) edge (3)
 (2) edge (4)
 (2) edge (5)
 (2) edge (6)
 (2) edge (7)
 (3) edge (4)
 (3) edge (5)
 (3) edge (6)
 (3) edge (7);
\end{tikzpicture}
}
&
\center{
\begin{tikzpicture}[font=\small,baseline=0]
\node[op,label={below:$5$}] (1) at (0,0) {};
\node[op,label={below:$2$}] (2) at (1,0) {};
\path[-]
 (1) edge (2);
\end{tikzpicture}
}\endline
\hline
\center{
\begin{tikzpicture}[scale = .4,baseline=-10]
\node[vertex] (1) at (0,1) {};
\node[vertex] (2) at (2,0) {};
\node[vertex] (3) at (4,0) {};
\node[vertex] (4) at (6,1) {};
\node[vertex] (5) at (1,2) {};
\node[vertex] (6) at (3,3) {};
\node[vertex] (7) at (5,2) {};
\path[-]
 (1) edge (2)
 (1) edge (3)
 (1) edge (4)
 (1) edge (5)
 (1) edge (6)
 (1) edge (7)
 (2) edge (3)
 (2) edge (4)
 (2) edge (5)
 (2) edge (6)
 (2) edge (7)
 (3) edge (4)
 (3) edge (5)
 (3) edge (6)
 (3) edge (7)
 (4) edge (5)
 (4) edge (6)
 (4) edge (7);
\end{tikzpicture}
}
&
\center{
\begin{tikzpicture}[font=\small,baseline=0]
\node[cl,label={below:$4$}] (1) at (0,0) {};
\node[op,label={below:$3$}] (2) at (1,0) {};
\path[-]
 (1) edge [loopup] (1)
 (1) edge (2);
\end{tikzpicture}
}\endline
\hline
\center{
\begin{tikzpicture}[scale = .4,baseline=-10]
\node[vertex] (1) at (0,1) {};
\node[vertex] (2) at (2,0) {};
\node[vertex] (3) at (4,0) {};
\node[vertex] (4) at (6,1) {};
\node[vertex] (5) at (1,2) {};
\node[vertex] (6) at (3,3) {};
\node[vertex] (7) at (5,2) {};
\path[-]
 (1) edge (5)
 (1) edge (6)
 (1) edge (7)
 (2) edge (5)
 (2) edge (6)
 (2) edge (7)
 (3) edge (5)
 (3) edge (6)
 (3) edge (7)
 (4) edge (5)
 (4) edge (6)
 (4) edge (7)
 (5) edge (6)
 (5) edge (7)
 (6) edge (7);
\end{tikzpicture}
}
&
\center{
\begin{tikzpicture}[font=\small,baseline=0]
\node[op,label={below:$4$}] (1) at (0,0) {};
\node[cl,label={below:$3$}] (2) at (1,0) {};
\path[-]
 (2) edge [loopup] (2)
 (1) edge (2);
\end{tikzpicture}
}\endline
\hline
\center{
\begin{tikzpicture}[scale = .4,baseline=-10]
\node[vertex] (1) at (0,1) {};
\node[vertex] (2) at (2,0) {};
\node[vertex] (3) at (4,0) {};
\node[vertex] (4) at (6,1) {};
\node[vertex] (5) at (1,2) {};
\node[vertex] (6) at (3,3) {};
\node[vertex] (7) at (5,2) {};
\path[-]
 (1) edge (5)
 (1) edge (6)
 (1) edge (7)
 (2) edge (5)
 (2) edge (6)
 (2) edge (7)
 (3) edge (5)
 (3) edge (6)
 (3) edge (7)
 (4) edge (5)
 (4) edge (6)
 (4) edge (7);
\end{tikzpicture}
}
&
\center{
\begin{tikzpicture}[font=\small,baseline=0]
\node[op,label={below:$4$}] (1) at (0,0) {};
\node[op,label={below:$3$}] (2) at (1,0) {};
\path[-]
 (1) edge (2);
\end{tikzpicture}
}\endline
\hline
\center{
\begin{tikzpicture}[scale = .4,baseline=-10]
\node[vertex] (1) at (0,1) {};
\node[vertex] (2) at (2,0) {};
\node[vertex] (3) at (4,0) {};
\node[vertex] (4) at (6,1) {};
\node[vertex] (5) at (1,2) {};
\node[vertex] (6) at (3,3) {};
\node[vertex] (7) at (5,2) {};
\path[-]
 (1) edge (3)
 (1) edge (4)
 (1) edge (5)
 (1) edge (6)
 (1) edge (7)
 (2) edge (3)
 (2) edge (4)
 (2) edge (5)
 (2) edge (6)
 (2) edge (7)
 (3) edge (5)
 (3) edge (6)
 (3) edge (7)
 (4) edge (5)
 (4) edge (6)
 (4) edge (7)
 (5) edge (6)
 (5) edge (7)
 (6) edge (7);
\end{tikzpicture}
}
&
\center{
\begin{tikzpicture}[font=\small,baseline=-10]
\node[op,label={below:$2$}] (1) at (0,-1) {};
\node[cl,label={below:$3$}] (2) at (1,0) {};
\node[op,label={below:$2$}] (3) at (2,-1) {};
\path[-]
 (2) edge [loopup] (2)
 (1) edge (2)
 (1) edge (3)
 (2) edge (3);
\end{tikzpicture}
}\endline
\hline
\center{
\begin{tikzpicture}[scale = .4,baseline=-10]
\node[vertex] (1) at (0,1) {};
\node[vertex] (2) at (2,0) {};
\node[vertex] (3) at (4,0) {};
\node[vertex] (4) at (6,1) {};
\node[vertex] (5) at (1,2) {};
\node[vertex] (6) at (3,3) {};
\node[vertex] (7) at (5,2) {};
\path[-]
 (1) edge (3)
 (1) edge (4)
 (1) edge (5)
 (1) edge (6)
 (1) edge (7)
 (2) edge (3)
 (2) edge (4)
 (2) edge (5)
 (2) edge (6)
 (2) edge (7)
 (3) edge (5)
 (3) edge (6)
 (3) edge (7)
 (4) edge (5)
 (4) edge (6)
 (4) edge (7);
\end{tikzpicture}
}
&
\center{
\begin{tikzpicture}[font=\small,baseline=-10]
\node[op,label={below:$2$}] (1) at (0,-1) {};
\node[op,label={below:$3$}] (2) at (1,0) {};
\node[op,label={below:$2$}] (3) at (2,-1) {};
\path[-]
 (1) edge (2)
 (1) edge (3)
 (2) edge (3);
\end{tikzpicture}
}\endline
\hline
\center{
\begin{tikzpicture}[scale = .4,baseline=-10]
\node[vertex] (1) at (0,1) {};
\node[vertex] (2) at (2,0) {};
\node[vertex] (3) at (4,0) {};
\node[vertex] (4) at (6,1) {};
\node[vertex] (5) at (1,2) {};
\node[vertex] (6) at (3,3) {};
\node[vertex] (7) at (5,2) {};
\path[-]
 (1) edge (3)
 (1) edge (4)
 (1) edge (5)
 (1) edge (6)
 (1) edge (7)
 (2) edge (3)
 (2) edge (4)
 (2) edge (5)
 (2) edge (6)
 (2) edge (7)
 (5) edge (6)
 (5) edge (7)
 (6) edge (7);
\end{tikzpicture}
}
&
\center{
\begin{tikzpicture}[font=\small,baseline=-25]
\node[cl,label={below:$3$}] (1) at (0,-1) {};
\node[op,label={below:$2$}] (2) at (1,-1) {};
\node[op,label={below:$2$}] (3) at (2,-1) {};
\path[-]
 (1) edge [loopup] (1)
 (1) edge (2)
 (2) edge (3);
\end{tikzpicture}
}\endline
\hline
\caption{Anosov graphs on seven vertices and their corresponding quotient graphs induced by $\sim$.}\label{anosov_list_7}
\end{longtable}

\begin{longtable}{| p{.4\textwidth} | p{.4\textwidth} |}
\hline
\center{
\begin{tikzpicture}[scale = .4,baseline=-10]
\node[vertex] (1) at (0,1) {};
\node[vertex] (2) at (2,0) {};
\node[vertex] (3) at (4,0) {};
\node[vertex] (4) at (6,1) {};
\node[vertex] (5) at (0,2) {};
\node[vertex] (6) at (2,3) {};
\node[vertex] (7) at (4,3) {};
\node[vertex] (8) at (6,2) {};
\path[-]
 (1) edge (2)
 (1) edge (3)
 (1) edge (4)
 (1) edge (5)
 (1) edge (6)
 (1) edge (7)
 (1) edge (8)
 (2) edge (3)
 (2) edge (4)
 (2) edge (5)
 (2) edge (6)
 (2) edge (7)
 (2) edge (8)
 (3) edge (4)
 (3) edge (5)
 (3) edge (6)
 (3) edge (7)
 (3) edge (8)
 (4) edge (5)
 (4) edge (6)
 (4) edge (7)
 (4) edge (8)
 (5) edge (6)
 (5) edge (7)
 (5) edge (8)
 (6) edge (7)
 (6) edge (8)
 (7) edge (8);
\end{tikzpicture}
}
&
\center{
\begin{tikzpicture}[font=\small,baseline=-20]
\node[cl,label={right:$8$}] (1) at (0,-1) {};
\path[-]
 (1) edge [loopup] (1);
\end{tikzpicture}
}\endline
\hline
\center{
\begin{tikzpicture}[scale = .4,baseline=-10]
\node[vertex] (1) at (0,1) {};
\node[vertex] (2) at (2,0) {};
\node[vertex] (3) at (4,0) {};
\node[vertex] (4) at (6,1) {};
\node[vertex] (5) at (0,2) {};
\node[vertex] (6) at (2,3) {};
\node[vertex] (7) at (4,3) {};
\node[vertex] (8) at (6,2) {};
\path[-]
 (1) edge (2)
 (1) edge (3)
 (1) edge (4)
 (1) edge (5)
 (1) edge (6)
 (1) edge (7)
 (1) edge (8)
 (2) edge (4)
 (2) edge (5)
 (2) edge (6)
 (2) edge (7)
 (2) edge (8)
 (3) edge (4)
 (3) edge (5)
 (3) edge (6)
 (3) edge (7)
 (3) edge (8)
 (4) edge (5)
 (4) edge (6)
 (4) edge (7)
 (4) edge (8)
 (5) edge (6)
 (5) edge (7)
 (5) edge (8)
 (6) edge (7)
 (6) edge (8)
 (7) edge (8);
\end{tikzpicture}
}
&
\center{
\begin{tikzpicture}[font=\small,baseline=5]
\node[cl,label={below:$6$}] (1) at (0,0) {};
\node[op,label={below:$2$}] (2) at (1,0) {};
\path[-]
 (1) edge [loopup] (1)
 (1) edge (2);
\end{tikzpicture}
}\endline
\hline
\center{
\begin{tikzpicture}[scale = .4,baseline=-10]
\node[vertex] (1) at (0,1) {};
\node[vertex] (2) at (2,0) {};
\node[vertex] (3) at (4,0) {};
\node[vertex] (4) at (6,1) {};
\node[vertex] (5) at (0,2) {};
\node[vertex] (6) at (2,3) {};
\node[vertex] (7) at (4,3) {};
\node[vertex] (8) at (6,2) {};
\path[-]
 (1) edge (2)
 (1) edge (3)
 (2) edge (4)
 (2) edge (5)
 (2) edge (6)
 (2) edge (7)
 (2) edge (8)
 (3) edge (4)
 (3) edge (5)
 (3) edge (6)
 (3) edge (7)
 (3) edge (8);
\end{tikzpicture}
}
&
\center{
\begin{tikzpicture}[font=\small,baseline=5]
\node[op,label={below:$6$}] (1) at (0,0) {};
\node[op,label={below:$2$}] (2) at (1,0) {};
\path[-]
 (1) edge (2);
\end{tikzpicture}
}\endline
\hline
\center{
\begin{tikzpicture}[scale = .4,baseline=-10]
\node[vertex] (1) at (0,1) {};
\node[vertex] (2) at (2,0) {};
\node[vertex] (3) at (4,0) {};
\node[vertex] (4) at (6,1) {};
\node[vertex] (5) at (0,2) {};
\node[vertex] (6) at (2,3) {};
\node[vertex] (7) at (4,3) {};
\node[vertex] (8) at (6,2) {};
\path[-]
 (1) edge (4)
 (1) edge (5)
 (1) edge (6)
 (1) edge (7)
 (1) edge (8)
 (2) edge (4)
 (2) edge (5)
 (2) edge (6)
 (2) edge (7)
 (2) edge (8)
 (3) edge (4)
 (3) edge (5)
 (3) edge (6)
 (3) edge (7)
 (3) edge (8)
 (4) edge (5)
 (4) edge (6)
 (4) edge (7)
 (4) edge (8)
 (5) edge (6)
 (5) edge (7)
 (5) edge (8)
 (6) edge (7)
 (6) edge (8)
 (7) edge (8);
\end{tikzpicture}
}
&
\center{
\begin{tikzpicture}[font=\small,baseline=5]
\node[cl,label={below:$5$}] (1) at (0,0) {};
\node[op,label={below:$3$}] (2) at (1,0) {};
\path[-]
 (1) edge [loopup] (1)
 (1) edge (2);
\end{tikzpicture}
}\endline
\hline
\center{
\begin{tikzpicture}[scale = .4,baseline=-10]
\node[vertex] (1) at (0,1) {};
\node[vertex] (2) at (2,0) {};
\node[vertex] (3) at (4,0) {};
\node[vertex] (4) at (6,1) {};
\node[vertex] (5) at (0,2) {};
\node[vertex] (6) at (2,3) {};
\node[vertex] (7) at (4,3) {};
\node[vertex] (8) at (6,2) {};
\path[-]
 (1) edge (6)
 (1) edge (7)
 (1) edge (8)
 (2) edge (6)
 (2) edge (7)
 (2) edge (8)
 (3) edge (6)
 (3) edge (7)
 (3) edge (8)
 (4) edge (6)
 (4) edge (7)
 (4) edge (8)
 (5) edge (6)
 (5) edge (7)
 (5) edge (8)
 (6) edge (7)
 (6) edge (8)
 (7) edge (8);
\end{tikzpicture}
}
&
\center{
\begin{tikzpicture}[font=\small,baseline=5]
\node[op,label={below:$5$}] (1) at (0,0) {};
\node[cl,label={below:$3$}] (2) at (1,0) {};
\path[-]
 (2) edge [loopup] (2)
 (1) edge (2);
\end{tikzpicture}
}\endline
\hline
\center{
\begin{tikzpicture}[scale = .4,baseline=-10]
\node[vertex] (1) at (0,1) {};
\node[vertex] (2) at (2,0) {};
\node[vertex] (3) at (4,0) {};
\node[vertex] (4) at (6,1) {};
\node[vertex] (5) at (0,2) {};
\node[vertex] (6) at (2,3) {};
\node[vertex] (7) at (4,3) {};
\node[vertex] (8) at (6,2) {};
\path[-]
 (1) edge (4)
 (1) edge (5)
 (1) edge (6)
 (1) edge (7)
 (1) edge (8)
 (2) edge (4)
 (2) edge (5)
 (2) edge (6)
 (2) edge (7)
 (2) edge (8)
 (3) edge (4)
 (3) edge (5)
 (3) edge (6)
 (3) edge (7)
 (3) edge (8);
\end{tikzpicture}
}
&
\center{
\begin{tikzpicture}[font=\small,baseline=5]
\node[op,label={below:$5$}] (1) at (0,0) {};
\node[op,label={below:$3$}] (2) at (1,0) {};
\path[-]
 (1) edge (2);
\end{tikzpicture}
}\endline
\hline
\center{
\begin{tikzpicture}[scale = .4,baseline=-10]
\node[vertex] (1) at (0,1) {};
\node[vertex] (2) at (2,0) {};
\node[vertex] (3) at (4,0) {};
\node[vertex] (4) at (6,1) {};
\node[vertex] (5) at (0,2) {};
\node[vertex] (6) at (2,3) {};
\node[vertex] (7) at (4,3) {};
\node[vertex] (8) at (6,2) {};
\path[-]
 (1) edge (5)
 (1) edge (6)
 (1) edge (7)
 (1) edge (8)
 (2) edge (5)
 (2) edge (6)
 (2) edge (7)
 (2) edge (8)
 (3) edge (5)
 (3) edge (6)
 (3) edge (7)
 (3) edge (8)
 (4) edge (5)
 (4) edge (6)
 (4) edge (7)
 (4) edge (8)
 (5) edge (6)
 (5) edge (7)
 (5) edge (8)
 (6) edge (7)
 (6) edge (8)
 (7) edge (8);
\end{tikzpicture}
}
&
\center{
\begin{tikzpicture}[font=\small,baseline=5]
\node[cl,label={below:$4$}] (1) at (0,0) {};
\node[op,label={below:$4$}] (2) at (1,0) {};
\path[-]
 (1) edge [loopup] (1)
 (1) edge (2);
\end{tikzpicture}
}\endline
\hline
\center{
\begin{tikzpicture}[scale = .4,baseline=-10]
\node[vertex] (1) at (0,1) {};
\node[vertex] (2) at (2,0) {};
\node[vertex] (3) at (4,0) {};
\node[vertex] (4) at (6,1) {};
\node[vertex] (5) at (0,2) {};
\node[vertex] (6) at (2,3) {};
\node[vertex] (7) at (4,3) {};
\node[vertex] (8) at (6,2) {};
\path[-]
 (1) edge (5)
 (1) edge (6)
 (1) edge (7)
 (1) edge (8)
 (2) edge (5)
 (2) edge (6)
 (2) edge (7)
 (2) edge (8)
 (3) edge (5)
 (3) edge (6)
 (3) edge (7)
 (3) edge (8)
 (4) edge (5)
 (4) edge (6)
 (4) edge (7)
 (4) edge (8);
\end{tikzpicture}
}
&
\center{
\begin{tikzpicture}[font=\small,baseline=5]
\node[op,label={below:$4$}] (1) at (0,0) {};
\node[op,label={below:$4$}] (2) at (1,0) {};
\path[-]
 (1) edge (2);
\end{tikzpicture}
}\endline
\hline
\center{
\begin{tikzpicture}[scale = .4,baseline=-10]
\node[vertex] (1) at (0,1) {};
\node[vertex] (2) at (2,0) {};
\node[vertex] (3) at (4,0) {};
\node[vertex] (4) at (6,1) {};
\node[vertex] (5) at (0,2) {};
\node[vertex] (6) at (2,3) {};
\node[vertex] (7) at (4,3) {};
\node[vertex] (8) at (6,2) {};
\path[-]
 (1) edge (3)
 (1) edge (4)
 (1) edge (5)
 (1) edge (6)
 (1) edge (7)
 (1) edge (8)
 (2) edge (3)
 (2) edge (4)
 (2) edge (5)
 (2) edge (6)
 (2) edge (7)
 (2) edge (8)
 (3) edge (5)
 (3) edge (6)
 (3) edge (7)
 (3) edge (8)
 (4) edge (5)
 (4) edge (6)
 (4) edge (7)
 (4) edge (8)
 (5) edge (6)
 (5) edge (7)
 (5) edge (8)
 (6) edge (7)
 (6) edge (8)
 (7) edge (8);
\end{tikzpicture}
}
&
\center{
\begin{tikzpicture}[font=\small,baseline=-10]
\node[op,label={below:$2$}] (1) at (0,-1) {};
\node[cl,label={below:$4$}] (2) at (1,0) {};
\node[op,label={below:$2$}] (3) at (2,-1) {};
\path[-]
 (2) edge [loopup] (2)
 (1) edge (2)
 (2) edge (3)
 (1) edge (3);
\end{tikzpicture}
}\endline
\hline
\center{
\begin{tikzpicture}[scale = .4,baseline=-10]
\node[vertex] (1) at (0,1) {};
\node[vertex] (2) at (2,0) {};
\node[vertex] (3) at (4,0) {};
\node[vertex] (4) at (6,1) {};
\node[vertex] (5) at (0,2) {};
\node[vertex] (6) at (2,3) {};
\node[vertex] (7) at (4,3) {};
\node[vertex] (8) at (6,2) {};
\path[-]
 (1) edge (3)
 (1) edge (4)
 (1) edge (5)
 (1) edge (6)
 (1) edge (7)
 (1) edge (8)
 (2) edge (3)
 (2) edge (4)
 (2) edge (5)
 (2) edge (6)
 (2) edge (7)
 (2) edge (8)
 (3) edge (5)
 (3) edge (6)
 (3) edge (7)
 (3) edge (8)
 (4) edge (5)
 (4) edge (6)
 (4) edge (7)
 (4) edge (8);
\end{tikzpicture}
}
&
\center{
\begin{tikzpicture}[font=\small,baseline=-10]
\node[op,label={below:$2$}] (1) at (0,-1) {};
\node[op,label={below:$4$}] (2) at (1,0) {};
\node[op,label={below:$2$}] (3) at (2,-1) {};
\path[-]
 (1) edge (2)
 (2) edge (3)
 (1) edge (3);
\end{tikzpicture}
}\endline
\hline
\center{
\begin{tikzpicture}[scale = .4,baseline=-10]
\node[vertex] (1) at (0,1) {};
\node[vertex] (2) at (2,0) {};
\node[vertex] (3) at (4,0) {};
\node[vertex] (4) at (6,1) {};
\node[vertex] (5) at (0,2) {};
\node[vertex] (6) at (2,3) {};
\node[vertex] (7) at (4,3) {};
\node[vertex] (8) at (6,2) {};
\path[-]
 (1) edge (3)
 (1) edge (4)
 (1) edge (5)
 (1) edge (6)
 (1) edge (7)
 (1) edge (8)
 (2) edge (3)
 (2) edge (4)
 (2) edge (5)
 (2) edge (6)
 (2) edge (7)
 (2) edge (8)
 (5) edge (6)
 (5) edge (7)
 (5) edge (8)
 (6) edge (7)
 (6) edge (8)
 (7) edge (8);
\end{tikzpicture}
}
&
\center{
\begin{tikzpicture}[font=\small,baseline=-25]
\node[cl,label={below:$4$}] (1) at (0,-1) {};
\node[op,label={below:$2$}] (2) at (1,-1) {};
\node[op,label={below:$2$}] (3) at (2,-1) {};
\path[-]
 (1) edge [loopup] (1)
 (1) edge (2)
 (2) edge (3);
\end{tikzpicture}
}\endline
\hline
\center{
\begin{tikzpicture}[scale = .4,baseline=-10]
\node[vertex] (1) at (0,1) {};
\node[vertex] (2) at (2,0) {};
\node[vertex] (3) at (4,0) {};
\node[vertex] (4) at (6,1) {};
\node[vertex] (5) at (0,2) {};
\node[vertex] (6) at (2,3) {};
\node[vertex] (7) at (4,3) {};
\node[vertex] (8) at (6,2) {};
\path[-]
 (1) edge (2)
 (1) edge (3)
 (1) edge (4)
 (1) edge (5)
 (2) edge (3)
 (2) edge (4)
 (2) edge (5)
 (3) edge (4)
 (3) edge (5)
 (4) edge (6)
 (4) edge (7)
 (4) edge (8)
 (5) edge (6)
 (5) edge (7)
 (5) edge (8)
 (6) edge (7)
 (6) edge (8)
 (7) edge (8);
\end{tikzpicture}
}
&
\center{
\begin{tikzpicture}[font=\small,baseline=-25]
\node[cl,label={below:$3$}] (1) at (0,-1) {};
\node[op,label={below:$2$}] (2) at (1,-1) {};
\node[cl,label={below:$3$}] (3) at (2,-1) {};
\path[-]
 (1) edge [loopup] (1)
 (3) edge [loopup] (3)
 (1) edge (2)
 (2) edge (3);
\end{tikzpicture}
}\endline
\hline
\center{
\begin{tikzpicture}[scale = .4,baseline=-10]
\node[vertex] (1) at (0,1) {};
\node[vertex] (2) at (2,0) {};
\node[vertex] (3) at (4,0) {};
\node[vertex] (4) at (6,1) {};
\node[vertex] (5) at (0,2) {};
\node[vertex] (6) at (2,3) {};
\node[vertex] (7) at (4,3) {};
\node[vertex] (8) at (6,2) {};
\path[-]
 (1) edge (2)
 (1) edge (3)
 (1) edge (4)
 (1) edge (5)
 (2) edge (3)
 (2) edge (4)
 (2) edge (5)
 (3) edge (4)
 (3) edge (5)
 (4) edge (6)
 (4) edge (7)
 (4) edge (8)
 (5) edge (6)
 (5) edge (7)
 (5) edge (8);
\end{tikzpicture}
}
&
\center{
\begin{tikzpicture}[font=\small,baseline=-25]
\node[cl,label={below:$3$}] (1) at (0,-1) {};
\node[op,label={below:$2$}] (2) at (1,-1) {};
\node[op,label={below:$3$}] (3) at (2,-1) {};
\path[-]
 (1) edge [loopup] (1)
 (1) edge (2)
 (2) edge (3);
\end{tikzpicture}
}\endline
\hline
\center{
\begin{tikzpicture}[scale = .4,baseline=-10]
\node[vertex] (1) at (0,1) {};
\node[vertex] (2) at (2,0) {};
\node[vertex] (3) at (4,0) {};
\node[vertex] (4) at (6,1) {};
\node[vertex] (5) at (0,2) {};
\node[vertex] (6) at (2,3) {};
\node[vertex] (7) at (4,3) {};
\node[vertex] (8) at (6,2) {};
\path[-]
 (1) edge (2)
 (1) edge (3)
 (1) edge (4)
 (1) edge (5)
 (1) edge (6)
 (2) edge (3)
 (2) edge (4)
 (2) edge (5)
 (2) edge (6)
 (3) edge (4)
 (3) edge (5)
 (3) edge (6)
 (4) edge (5)
 (4) edge (6)
 (4) edge (7)
 (4) edge (8)
 (5) edge (6)
 (5) edge (7)
 (5) edge (8)
 (6) edge (7)
 (6) edge (8);
\end{tikzpicture}
}
&
\center{
\begin{tikzpicture}[font=\small,baseline=-25]
\node[cl,label={below:$3$}] (1) at (0,-1) {};
\node[cl,label={below:$3$}] (2) at (1,-1) {};
\node[op,label={below:$2$}] (3) at (2,-1) {};
\path[-]
 (1) edge [loopup] (1)
 (2) edge [loopup] (2)
 (1) edge (2)
 (2) edge (3);
\end{tikzpicture}
}\endline
\hline
\center{
\begin{tikzpicture}[scale = .4,baseline=-10]
\node[vertex] (1) at (0,1) {};
\node[vertex] (2) at (2,0) {};
\node[vertex] (3) at (4,0) {};
\node[vertex] (4) at (6,1) {};
\node[vertex] (5) at (0,2) {};
\node[vertex] (6) at (2,3) {};
\node[vertex] (7) at (4,3) {};
\node[vertex] (8) at (6,2) {};
\path[-]
 (1) edge (2)
 (1) edge (3)
 (1) edge (4)
 (1) edge (5)
 (1) edge (6)
 (2) edge (3)
 (2) edge (4)
 (2) edge (5)
 (2) edge (6)
 (3) edge (4)
 (3) edge (5)
 (3) edge (6)
 (4) edge (7)
 (4) edge (8)
 (5) edge (7)
 (5) edge (8)
 (6) edge (7)
 (6) edge (8);
\end{tikzpicture}
}
&
\center{
\begin{tikzpicture}[font=\small,baseline=-25]
\node[cl,label={below:$3$}] (1) at (0,-1) {};
\node[op,label={below:$3$}] (2) at (1,-1) {};
\node[op,label={below:$2$}] (3) at (2,-1) {};
\path[-]
 (1) edge [loopup] (1)
 (1) edge (2)
 (2) edge (3);
\end{tikzpicture}
}\endline
\hline
\center{
\begin{tikzpicture}[scale = .4,baseline=-10]
\node[vertex] (1) at (0,1) {};
\node[vertex] (2) at (2,0) {};
\node[vertex] (3) at (4,0) {};
\node[vertex] (4) at (6,1) {};
\node[vertex] (5) at (0,2) {};
\node[vertex] (6) at (2,3) {};
\node[vertex] (7) at (4,3) {};
\node[vertex] (8) at (6,2) {};
\path[-]
 (1) edge (2)
 (1) edge (3)
 (1) edge (4)
 (1) edge (5)
 (1) edge (6)
 (1) edge (7)
 (1) edge (8)
 (2) edge (3)
 (2) edge (4)
 (2) edge (5)
 (2) edge (6)
 (2) edge (7)
 (2) edge (8)
 (3) edge (4)
 (3) edge (5)
 (3) edge (6)
 (3) edge (7)
 (3) edge (8)
 (4) edge (5)
 (4) edge (6)
 (4) edge (7)
 (5) edge (8)
 (6) edge (8)
 (7) edge (8);
\end{tikzpicture}
}
&
\center{
\begin{tikzpicture}[font=\small,baseline=-10]
\node[cl,label={below:$3$}] (1) at (0,-1) {};
\node[op,label={below:$3$}] (2) at (1,0) {};
\node[op,label={below:$2$}] (3) at (2,-1) {};
\path[-]
 (1) edge [loopup] (1)
 (1) edge (2)
 (2) edge (3)
 (1) edge (3);
\end{tikzpicture}
}\endline
\hline
\center{
\begin{tikzpicture}[scale = .4,baseline=-10]
\node[vertex] (1) at (0,1) {};
\node[vertex] (2) at (2,0) {};
\node[vertex] (3) at (4,0) {};
\node[vertex] (4) at (6,1) {};
\node[vertex] (5) at (0,2) {};
\node[vertex] (6) at (2,3) {};
\node[vertex] (7) at (4,3) {};
\node[vertex] (8) at (6,2) {};
\path[-]
 (1) edge (4)
 (1) edge (5)
 (1) edge (6)
 (1) edge (7)
 (1) edge (8)
 (2) edge (4)
 (2) edge (5)
 (2) edge (6)
 (2) edge (7)
 (2) edge (8)
 (3) edge (4)
 (3) edge (5)
 (3) edge (6)
 (3) edge (7)
 (3) edge (8)
 (4) edge (5)
 (4) edge (6)
 (4) edge (7)
 (5) edge (8)
 (6) edge (8)
 (7) edge (8);
\end{tikzpicture}
}
&
\center{
\begin{tikzpicture}[font=\small,baseline=-10]
\node[op,label={below:$3$}] (1) at (0,-1) {};
\node[op,label={below:$3$}] (2) at (1,0) {};
\node[op,label={below:$2$}] (3) at (2,-1) {};
\path[-]
 (1) edge (2)
 (2) edge (3)
 (1) edge (3);
\end{tikzpicture}
}\endline
\hline
\center{
\begin{tikzpicture}[scale = .4,baseline=-10]
\node[vertex] (1) at (0,1) {};
\node[vertex] (2) at (2,0) {};
\node[vertex] (3) at (4,0) {};
\node[vertex] (4) at (6,1) {};
\node[vertex] (5) at (0,2) {};
\node[vertex] (6) at (2,3) {};
\node[vertex] (7) at (4,3) {};
\node[vertex] (8) at (6,2) {};
\path[-]
 (1) edge (3)
 (1) edge (4)
 (1) edge (5)
 (1) edge (6)
 (2) edge (3)
 (2) edge (4)
 (2) edge (5)
 (2) edge (6)
 (3) edge (5)
 (3) edge (6)
 (4) edge (5)
 (4) edge (6)
 (5) edge (7)
 (5) edge (8)
 (6) edge (7)
 (6) edge (8);
\end{tikzpicture}
}
&
\center{
\begin{tikzpicture}[font=\small,baseline=-10]
\node[op,label={left:$2$}] (1) at (0,-1) {};
\node[op,label={right:$2$}] (2) at (1,-1) {};
\node[op,label={left:$2$}] (3) at (0,0) {};
\node[op,label={right:$2$}] (4) at (1,0) {};
\path[-]
 (1) edge (2)
 (2) edge (3)
 (1) edge (3)
 (3) edge (4);
\end{tikzpicture}
}
\endline
\hline
\center{
\begin{tikzpicture}[scale = .4,baseline=-10]
\node[vertex] (1) at (0,1) {};
\node[vertex] (2) at (2,0) {};
\node[vertex] (3) at (4,0) {};
\node[vertex] (4) at (6,1) {};
\node[vertex] (5) at (0,2) {};
\node[vertex] (6) at (2,3) {};
\node[vertex] (7) at (4,3) {};
\node[vertex] (8) at (6,2) {};
\path[-]
 (1) edge (3)
 (1) edge (4)
 (2) edge (3)
 (2) edge (4)
 (3) edge (5)
 (3) edge (6)
 (4) edge (5)
 (4) edge (6)
 (5) edge (7)
 (5) edge (8)
 (6) edge (7)
 (6) edge (8);
\end{tikzpicture}
}
&
\center{
\begin{tikzpicture}[font=\small,baseline=-25]
\node[op,label={below:$2$}] (1) at (0,-1) {};
\node[op,label={below:$2$}] (2) at (1,-1) {};
\node[op,label={below:$2$}] (3) at (2,-1) {};
\node[op,label={below:$2$}] (4) at (3,-1) {};
\path[-]
 (1) edge (2)
 (2) edge (3)
 (3) edge (4);
\end{tikzpicture}
}\endline
\hline
\center{
\begin{tikzpicture}[scale = .4,baseline=-10]
\node[vertex] (1) at (0,1) {};
\node[vertex] (2) at (2,0) {};
\node[vertex] (3) at (4,0) {};
\node[vertex] (4) at (6,1) {};
\node[vertex] (5) at (0,2) {};
\node[vertex] (6) at (2,3) {};
\node[vertex] (7) at (4,3) {};
\node[vertex] (8) at (6,2) {};
\path[-]
 (1) edge (3)
 (1) edge (4)
 (1) edge (5)
 (1) edge (6)
 (1) edge (7)
 (1) edge (8)
 (2) edge (3)
 (2) edge (4)
 (2) edge (5)
 (2) edge (6)
 (2) edge (7)
 (2) edge (8)
 (3) edge (5)
 (3) edge (6)
 (3) edge (7)
 (3) edge (8)
 (4) edge (5)
 (4) edge (6)
 (4) edge (7)
 (4) edge (8)
 (5) edge (7)
 (5) edge (8)
 (6) edge (7)
 (6) edge (8);
\end{tikzpicture}
}
&
\center{
\begin{tikzpicture}[font=\small,baseline=-10]
\node[op,label={left:$2$}] (1) at (0,-1) {};
\node[op,label={right:$2$}] (2) at (1,-1) {};
\node[op,label={left:$2$}] (3) at (0,0) {};
\node[op,label={right:$2$}] (4) at (1,0) {};
\path[-]
 (1) edge (2)
 (1) edge (3)
 (1) edge (4)
 (2) edge (3)
 (2) edge (4)
 (3) edge (4);
\end{tikzpicture}
}\endline
\hline
\caption{Anosov graphs on eight vertices and their corresponding quotient graphs induced by $\sim$.}\label{anosov_list_8}
\end{longtable}

\section{Anosov test in \sage}\label{appB}

\vspace*{-10pt}
\begin{lstlisting}
def is_Anosov(self, allow_disconnected=False):
    r"""
    Return ``True`` if ``self`` is Anosov.
    """
    n = self.order()
    L = range(n)
    classes = []
    V = self.vertices()
    if self.is_connected() == False and allow_disconnected == False:
        return False
    for v in range(n):
        N_v = self.neighbors(V[v])
        CN_v = self.neighbors(V[v]) + [v]
        CN_v.sort()
        L[v] = [v]
        for u in range(n):
            N_u = self.neighbors(V[u])
            CN_u = self.neighbors(V[u]) + [u]
            CN_u.sort()
            if ((N_v == N_u) or (CN_v == CN_u)) and (v != u):
                L[v].append(u)
                L[v].sort()
        if (L[v] not in classes):
            classes.append(L[v])
    for i in range(len(classes)):
        if len(classes[i]) == 1:
            return False
        if len(classes[i]) == 2:
            for j in classes[i]:
                for k in classes[i]:
                    if self.neighbors(V[j]) != self.neighbors(V[k]):
                        return False
    return True
\end{lstlisting}

\section*{Acknowledgements}
The authors would like to thank Mohan Shrikhande for a careful reading of a preliminary version of this manuscript, which was the second named author's master's thesis.  The authors would also like to thank Dave Morris for very helpful suggestions on an earlier draft about the presentation of this material.  Finally, the authors thank the developers of \sage\ \cite{sage} for making their lives' much easier via the excellent graph support in \sage.  
\bigskip


\begin{thebibliography}{1}

\bibitem{DM}
S.~G. Dani and Meera~G. Mainkar, \emph{Anosov automorphisms on compact nilmanifolds associated with graphs}, Trans. Amer. Math. Soc. \textbf{357} (2005), no.~6, 2235--2251.

\bibitem{Diestel}
Reinhard Diestel, \emph{Graph theory}, fourth ed., Graduate Texts in Mathematics, vol. 173, Springer, Heidelberg, 2010.
  
\bibitem{HR}
G.~H. Hardy and S. Ramanujan, \emph{Asymptotic formul\ae\ in combinatory analysis}, Proc. London Math. Soc. \textbf{(2)~17} (1918), 75--115.

\bibitem{M}
Meera~G. Mainkar, \emph{Graphs and two-step nilpotent {L}ie algebras}, Groups Geom. Dyn \textbf{9} (2015), no.~9, 55--65.

\bibitem{sage}
The Sage Developers, \emph{{S}age {M}athematics {S}oftware ({V}ersion $6.8$)}, 2015, \url{http://www.sagemath.org}.

\end{thebibliography}
\end{document}